\newtheorem{theorem}{Theorem}
\newtheorem{lem}[theorem]{Lemma}
\newtheorem{conj}{Conjecture}
\newtheorem{mydef}{Definition}
\title{Isoperimetric bubbles in spaces with density $r^p + a$ }
\author{Martyn Gwynne\thanks{mag117@aber.ac.uk}\,}
\author{Simon Cox\thanks{sxc@aber.ac.uk} }
\affil{Department of Mathematics, Aberystwyth University,\\ Ceredigion SY23 3BZ, UK}
\begin{document}

\maketitle

\begin{abstract}
    Least perimeter solutions for a region with fixed mass are sought in ${\mathbb{R}^d}$ on which a density function $\rho(r) = r^p+a$, with $p>0, a>0$, weights both perimeter and mass.
    On the real line ($d=1$) this is a single interval that includes the origin. For $p \le 1$ the isoperimetric interval has one end at the origin; for larger $p$ there is a critical value of $a$ above which the interval is symmetric about the origin. In the case $p=2$, for $d=2$ and $3$, the isoperimetric region is a circle or sphere, respectively, that includes the origin; the centre moves towards the origin as $a$ increases, with constant radius, and then remains centred on the origin for $a$ above the critical value as the radius decreases.
\end{abstract}

\section{Introduction}  

A bubble floating in three-dimensional (3D) Euclidean space assumes the shape of a sphere because its surface area is minimised for a given volume. This is an example of an isoperimetric problem~\citep{morgan2016geometric}. Here, we seek to describe the shape of a bubble in spaces where the density is not constant, and instead increases with distance from the origin, where it is non-zero. 

Both the area and volume of the ``bubble" are weighted by the density $\rho(r)$. In terms of the underlying Riemannian volume $dV$ and perimeter $dP$ in 3D, the new weighted volume  and surface area are given by
$dV_{\rho}=\rho(r) dV$ and
$dA_{\rho}=\rho(r) dA$.

In 2D we consider isoperimetric regions in the plane, that is, regions that minimize their weighted perimeter with fixed, weighted, area. And in 1D isoperimetric regions are line intervals that minimize the weighted position of their endpoints subject to a fixed, weighted, length.

\cite{morgan2013existence} show that if $\rho$ is a radial, increasing density on $\mathbb{R}$, then isoperimetric sets exist for any bubble volume. The log-convex density theorem~\citep {chambers2019proof} implies that balls around the origin are isoperimetric if the density function increases rapidly away from the origin (to be precise, if $\rho(r)={\rm e}^{g(r)}$ with $g$ smooth, convex and even; that is, if $\rho(r)$ is a log-convex function), and that this is the unique solution. 

On the other hand, isoperimetric regions in 1D  (i.e. intervals on the real line $x \in (-\infty,+\infty)$) for densities that increase with distance from the origin are connected and have one end at the origin~\citep{rosales2008isoperimetric,huang2019isoperimetric}. In 2D, in the plane with radial density $r^p$, $p>0$, an isoperimetric region is a circle through the origin (that is, its perimeter passes through the origin)~\citep{dahlberg2010isoperimetric}. 

We seek to reconcile these two results with a density function that has a free (additive) parameter, $\rho(r) = r^p+a$, that allows us to interpolate between densities of the form $r^p$ and those that are log-convex for sufficiently large $a$.\footnote{A further motivation for this choice of density function is the desire to avoid the physically questionable situation of having zero density at a point in space.} Given the  mass of the region, i.e. its weighted volume (which may be the length of an interval in $\mathbb{R}^1$, the area inside a closed contour in $\mathbb{R}^2$ or the volume inside a closed surface in $\mathbb{R}^3$), we try to describe the properties of an isoperimetric interval for different values of the power $p$ and offset $a$, with $p> 0$ and $a>0$.

\section{When the density is log-convex}
\label{sec:logconvex}

We determine the conditions under which the density function $\rho(r) = r^p+a$ is a log-convex function of $r$.

\begin{mydef}
A function $\rho(r)$ is \emph{log-convex} if $\psi(r)=\log(\rho(r))$ has positive second derivative, $\psi^{\prime\prime}>0$.
\end{mydef}

 
\begin{lem} 
\label{lem:logconvex_allD}
Let the density function be $\rho(r)=r^p+a$, with $p>1$ and $a>0$. A symmetric region of mass $M$ in $d$ dimensions, centred on the origin, is isoperimetric if $a$ is greater than
\begin{equation}
a_{crit}=
  \left(\frac{d(p+d)}{k_d p(d+1)}
   \right)^\frac{p}{p+d}
   (p-1)^{-\frac{d}{p+d}}
    M^\frac{p}{p+d},
\label{eq:acrit_p_gt_1_allD} 
\end{equation} 
where $k_d$ is the surface area of a $d$-dimensional unit sphere, $k_1= 1$, $k_2 = 2\pi$, $k_3 = 4\pi$.
\end{lem}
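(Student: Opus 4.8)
The plan is to combine a direct log-convexity computation with the log-convex density theorem quoted in the introduction. First I would compute the sign of $\psi''$ for $\psi(r)=\log(r^p+a)$. A quotient-rule calculation gives
\begin{equation}
\psi''(r)=\frac{p\,r^{p-2}\big[(p-1)a-r^p\big]}{(r^p+a)^2},
\end{equation}
so that, since $p>1$ and $a>0$, the density is log-convex precisely on the ball $r<r_{crit}:=((p-1)a)^{1/p}$ and fails to be log-convex outside it. In particular $\rho$ is never log-convex on all of $\mathbb{R}^d$, but it is log-convex on every ball about the origin of radius at most $r_{crit}$.

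Next I would pin down the centred candidate. The mass of the ball $B_R$ about the origin is
\begin{equation}
M=k_d\int_0^R (r^p+a)\,r^{d-1}\,dr=k_d\left(\frac{R^{p+d}}{p+d}+a\,\frac{R^d}{d}\right),
\end{equation}
which for fixed $M$ determines $R=R(a)$ implicitly. By implicit differentiation $R(a)$ is strictly decreasing (raising $a$ increases the density everywhere, so a smaller ball carries the same mass), whereas $r_{crit}(a)$ is strictly increasing; hence $R\le r_{crit}$ holds for all $a$ above a single threshold, with equality at the threshold. Setting $R^p=(p-1)a$ there and substituting into the mass formula collapses it to
\begin{equation}
M=k_d\,R^{p+d}\,\frac{p(d+1)}{d(p-1)(p+d)};
\end{equation}
eliminating $R$ through $R=((p-1)a)^{1/p}$ and solving for $a$ then reproduces exactly the stated $a_{crit}$.

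Finally, for $a\ge a_{crit}$ the centred ball of mass $M$ lies inside the region on which $\rho$ is log-convex, and I would invoke the log-convex density theorem to conclude that this ball is the (unique) isoperimetric region. The step needing the most care is precisely this last invocation: the theorem as quoted asks for log-convexity throughout $\mathbb{R}^d$, whereas here it holds only on $B_{r_{crit}}$. The real work is therefore the local-to-global passage --- arguing that log-convexity out to the radius $R$ actually occupied by the candidate (equivalently $\psi''\ge0$ on $[0,R]$, which is also the natural stability condition for the sphere $\partial B_R$ as a critical point of weighted perimeter) suffices to force global minimality, or else citing a suitably local form of the theorem. The monotonicity established above guarantees the containment, so all of the geometric content reduces to this point.
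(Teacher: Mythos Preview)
Your approach is essentially identical to the paper's: compute $\psi''$, read off the radius $r_{crit}=((p-1)a)^{1/p}$ on which $\rho$ is log-convex, equate this to the radius of the centred ball of mass $M$, and solve for $a_{crit}$. Your extra monotonicity argument (that $R(a)$ decreases while $r_{crit}(a)$ increases, so $R\le r_{crit}$ holds precisely for $a\ge a_{crit}$) is a nice clarification that the paper leaves implicit.

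The concern you raise at the end is well placed and is not resolved in the paper either. The paper simply asserts that ``provided the region can fit within a distance $R=(a(p-1))^{1/p}$ of the origin, it will be a symmetric ball,'' which is exactly the local-to-global step you flag. Chambers' theorem, as quoted, demands log-convexity on all of $\mathbb{R}^d$, and $\rho(r)=r^p+a$ is never globally log-convex for finite $a$. So the paper's argument, like yours, delivers the formula for $a_{crit}$ but does not actually close the loop on why log-convexity \emph{only out to the candidate radius} suffices for the isoperimetric conclusion. Your proof is therefore at least as complete as the paper's, and more honest about where the genuine difficulty lies; a fully rigorous version would need either a localized variant of the log-convex density theorem or an independent comparison argument ruling out competitors that protrude beyond $r_{crit}$.
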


So if this value of $a$ is exceeded, a 1D isoperimetric region is an interval of the real line that is symmetric about the origin. In 2D, the isoperimetric region is a circle centred on the origin. And in 3D it is a sphere with centre at the origin.

\begin{proof}
We have
\begin{equation}
\psi^{\prime\prime} =  \frac{p \,r^{p-2}}{\left(r^p+a\right)^2}
\left( a(p-1)-r^p \right)
\label{eq:psi}
\end{equation}
so the density function is convex if $r^p < a(p-1)$. That is, provided the region can fit within a distance $R=\left( a(p-1) \right)^{1/p}$ of the origin, it will be a symmetric ``ball".

The mass of a $d$-dimensional ball with this critical radius $R = \left( a(p-1) \right)^{1/p}$,  centred on the origin, is 
 \begin{eqnarray}
  M_{crit} & =& k_d\int_0^R r^{d-1} \rho(r) \, {\rm d}r \nonumber \\
  & =&  k_d R^d\left(\frac{R^{p}}{p+d}+\frac{a}{d}\right) 
  \label{eq:2DMcrit_1} \\
  & = &\frac{k_d p(d+1)}{d(p+d)} (p-1)^\frac{d}{p}
    a^\frac{p+d}{p}.
\label{eq:2DMcrit}
\end{eqnarray}
Rearranging this expression gives the result for $a_{crit}$ in~eq.~(\ref{eq:acrit_p_gt_1_allD}).

\end{proof}

For larger values of the offset $a$ or, equivalently, smaller mass $M$, it is necessary to solve eq.~(\ref{eq:2DMcrit_1}) for $R$ in order to find the perimeter of the isoperimetric region. This is not always straightforward. The critical value of $a$ is not defined for $p < 1$. Hence for $p < 1$ the isoperimetric region is not a ball centred on the origin. We discuss below, in each of dimensions one to three, the details of the shape of the isoperimetric region for $a$ less than this critical value.

\section{Isoperimetric intervals on the real line (1D)}
\label{sec:defns}

We first consider a single interval on the real line. We show that the isoperimetric interval is connected, and that it includes the origin.

\begin{mydef} 
On the real line $\mathbb{R}$, a $\emph {density}$ function $\rho$ is a positive continuous function that weights each point $x$ in $\mathbb{R}$ with a certain mass $\rho(x)$.  Given an interval $[\alpha,\beta] \subset\mathbb{R}$, the weighted perimeter and mass of the interval $[\alpha,\beta]$ are given by:
\[ {\rm Perimeter, } \quad P= \rho(\alpha) + \rho(\beta)\]
and
\[ {\rm Mass,} \quad M=\int_{\alpha}^{\beta} \rho(x)  {\rm d}x.
\] 
\end{mydef}

\begin{mydef} 
An interval $[\alpha,\beta]\subset\mathbb{R}$ is \emph{isoperimetric} if it has the smallest weighted perimeter of all intervals with the same weighted mass. 
\end{mydef}

With the density function $\rho(x) = |x| ^p + a$  considered here, on an interval $[\alpha,\beta]$, we will show that the left-hand end of the interval ($\alpha$) is always non-positive. Hence 
\begin{equation}
   P(\alpha,\beta)= |\alpha|^{p}  + \beta^{p} + 2 a
   \label{eq:perim_general}
\end{equation}
and
\begin{equation}
   M(\alpha,\beta)= \frac{\beta^{p+1}}{p+1}+ \frac{|\alpha|^{p+1}}{p+1}  + (\beta +|\alpha|)a.
   \label{eq:mass_general}
\end{equation}

To ensure that we may consider just a single interval in seeking to determine the interval with the least weighted perimeter, we need to show that any set of $N$ distinct intervals with a given total mass can be reduced to a single interval with at most the same perimeter and the same mass. Moreover, as the left end of this interval approaches the origin from above, its perimeter decreases.

    \begin{lem}
     For the density function $\rho(x)=|x|^p + a$, $p>0$ , $a > 0,$\ any finite set of intervals $[\alpha_i,\beta_i]$, $1\leq i\leq N,$ can be reduced to a single interval with an equal or lower perimeter while keeping the mass constant.  This interval, denoted $[\alpha,\beta]$, includes the origin, with $\alpha \leq 0, \beta>0$.
     \label{lem:joinintervals}
    \end{lem}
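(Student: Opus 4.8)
\section*{Proof proposal}

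The plan is to establish both assertions --- reduction to a single interval and capture of the origin --- through a sequence of mass-preserving deformations, each leaving the total mass unchanged while strictly lowering the perimeter, iterated until one interval remains. Throughout I use that $\rho(x)=|x|^p+a$ is even, attains its unique minimum at $x=0$, and is strictly monotone on each half-line, so that moving any endpoint \emph{towards} the origin strictly decreases its contribution $\rho(\cdot)$ to the perimeter.

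First I would dispose of overlaps: any two intervals that overlap or abut are replaced by their union, which keeps the mass fixed and deletes at least two interior endpoints, strictly lowering the perimeter. So assume the $N$ intervals are pairwise disjoint, with endpoints ordered $\alpha_1<\beta_1<\cdots<\alpha_N<\beta_N$. The core is a \emph{gap-closing} move reducing the component count by one. If the rightmost interval is strictly positive ($\alpha_N>0$), I slide it leftwards into the gap $(\beta_{N-1},\alpha_N)$, decreasing $\alpha_N$ to push its left end into the gap while decreasing $\beta_N$ to trim its right end, with the two rates linked by mass conservation, $\rho(\beta_N)\,\mathrm{d}\beta_N=\rho(\alpha_N)\,\mathrm{d}\alpha_N$. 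Since both endpoints are positive and move towards the origin ($\mathrm{d}\alpha_N<0$), the perimeter change
\[
\mathrm{d}P=p\,\alpha_N^{p-1}\,\mathrm{d}\alpha_N+p\,\beta_N^{p-1}\,\mathrm{d}\beta_N<0
\]
is strictly negative; and because $\rho$ is larger at $\beta_N$, the right end is trimmed by strictly less than the left end advances, so the interval lengthens and cannot collapse before its left end reaches $\beta_{N-1}$. At that instant the two coincident interior endpoints are deleted, dropping the perimeter by a further $2\rho(\beta_{N-1})>0$ and merging two components into one. If instead $\alpha_N\le 0$, then the leftmost interval must be strictly negative ($\beta_1<0$) and I apply the mirror-image move; the two moves fail simultaneously only if $\beta_1\ge0$ and $\alpha_N\le0$, which is incompatible with $\beta_1<\alpha_N$, so while $N\ge2$ at least one move is available. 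Iterating, the component count strictly decreases to a single interval $[\alpha,\beta]$.

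Finally I would capture the origin. If $[\alpha,\beta]$ already straddles the origin we are done, using the evenness of $\rho$ to reflect it if necessary so that $\beta>0$. Otherwise, say $0<\alpha<\beta$; I slide both endpoints towards the origin with the mass fixed, so $\rho(\beta)\,\mathrm{d}\beta=\rho(\alpha)\,\mathrm{d}\alpha$, whence
\[
\mathrm{d}P=p\left[\alpha^{p-1}+\beta^{p-1}\,\frac{\rho(\alpha)}{\rho(\beta)}\right]\mathrm{d}\alpha<0
\]
since $\mathrm{d}\alpha<0$. This is exactly the ``moreover'' claim that the perimeter decreases as the left end approaches the origin from above; continuing until $\alpha=0$ produces an interval containing the origin, and the case $\beta<\alpha<0$ is symmetric.

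The step I expect to be the main obstacle is verifying, in the gap-closing move, that mass conservation genuinely forces \emph{both} moving endpoints towards the origin --- so that the perimeter really decreases --- and that the interval cannot degenerate before the merge; this rests on the strict monotonicity of $\rho$ away from the origin and must be checked with the correct signs. A secondary point needing care is the bookkeeping guaranteeing that one of the two sliding moves is always available while $N\ge2$, so that the induction on the number of components terminates at a single interval.
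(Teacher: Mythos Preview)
Your approach is essentially the paper's: both exploit the strict monotonicity of $\rho$ on each half-line to slide intervals towards the origin while preserving mass and lowering perimeter, then merge. The paper packages this as three discrete moves (jump an outer interval on $[0,\infty)$ to abut the next one in; move a single positive interval so its left end hits $0$; delete the doubled endpoint when two intervals meet at $0$), whereas you phrase the same manoeuvres as a continuous differential slide of the outermost component. Either version works and the underlying inequality is the same.

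One point in your version needs tightening. Your gap-closing move for the rightmost interval relies on both moving endpoints remaining positive throughout the slide---that is what makes each term of $\mathrm{d}P$ negative. But your case split only checks $\alpha_N>0$ at the start; if $\beta_{N-1}<0$ (e.g.\ $N=2$ with one interval on each side of the origin) the left endpoint reaches $0$ before it reaches $\beta_{N-1}$, and once $\alpha_N<0$ the contribution $\rho'(\alpha_N)\,\mathrm{d}\alpha_N$ changes sign, so your displayed inequality no longer gives $\mathrm{d}P<0$. Your ``both moves fail only if $\beta_1\ge0$ and $\alpha_N\le0$'' check guarantees a move can \emph{start}, not that it can \emph{complete} the merge. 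The fix is routine: stop the slide at $\alpha_N=0$ (perimeter has strictly decreased so far) and switch to the mirror move on the leftmost interval, which is then available since $\beta_1\le\beta_{N-1}<0$; or, as the paper does, first merge within each half-line separately and only then join across the origin.
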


The proof closely follows the one given in~\cite{huang2019isoperimetric} for $\rho(x) = |x|^p$; further details can be found in appendix~\ref{sec:app-twointervals}.

We now focus on a single interval $[\alpha,\beta]$ where $\alpha\leq0$ without loss of generality, because of the symmetry of $\rho(x)$. Figure~\ref{fig:sketch} shows examples of the density function $\rho(x)=| x | ^p + a$ and illustrates the different possible least perimeter intervals which we will derive. 

\begin{figure}
\centering
 \begin{subfigure}[b]{0.32\textwidth}
   \includegraphics[width=\textwidth]{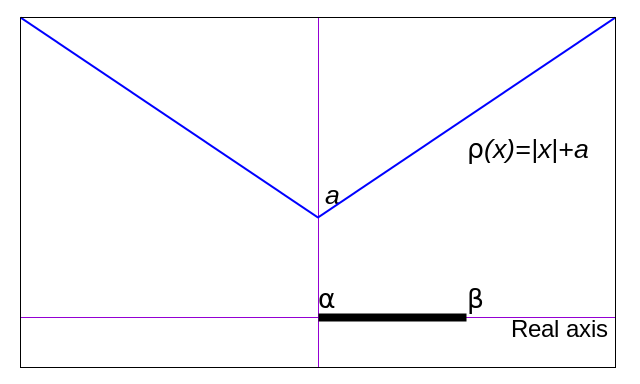}
   \caption{}
   \label{fig:sketchn1}
 \end{subfigure}
 \hfill
 \begin{subfigure}[b]{0.32\textwidth}
   \includegraphics[width=\textwidth]{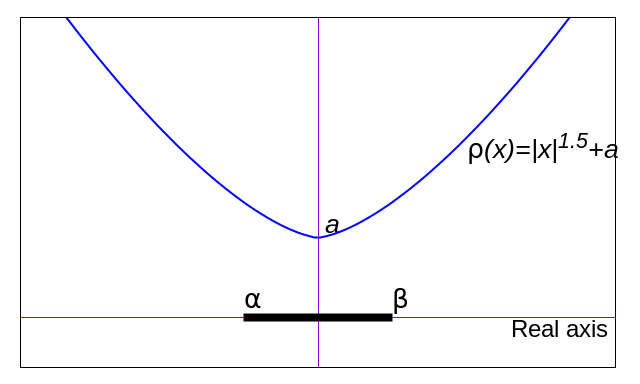}
   \caption{}
   \label{fig:sketchn1.5}
 \end{subfigure}
 \hfill
 \begin{subfigure}[b]{0.32\textwidth}
   \includegraphics[width=\textwidth]{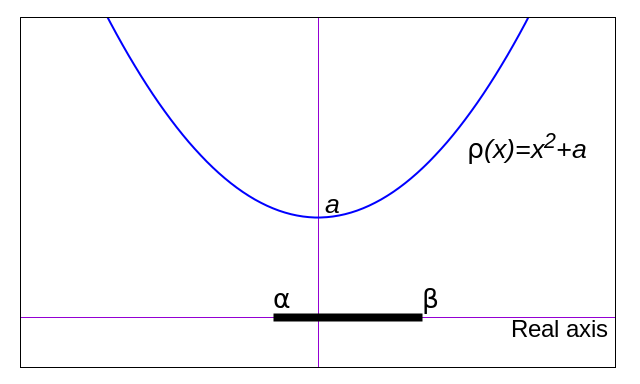}
   \caption{}
   \label{fig:sketchn2}
 \end{subfigure}
 \hfill    
   \caption{Examples of the 1D density function $\rho(x)=| x | ^p+a$ considered here and different least-perimeter intervals $[\alpha,\beta]$. (a) If $p=1$, the least perimeter interval always has one end at the origin. (b) For $p=1.5$, a symmetric interval is the least perimeter solution for large enough $a$, (c) For $p=2$, the isoperimetric solution is an asymmetric interval if $a$ is less than a critical value.}
\label{fig:sketch}
\end{figure}

The perimeter and extent of the optimal interval vary strongly with the offset $a$ and the exponent $p$. 
The values of $p$ and $a$ for which the isoperimetric interval is symmetric can be found by determining when $\rho$ is log-convex, and applying the log-convex density theorem, as evaluated for this density function in Lemma~\ref{lem:logconvex_allD}. That is, the interval is symmetric if $a$ is greater than
\begin{equation}
a_{crit}=\left(\frac{p+1}{4p}\right)^\frac{p}{p+1} (p-1)^{-\frac{1}{p+1}} M^\frac{p}{p+1}.
\label{eq:acrit_p_gt_1} 
\end{equation} 
Then to find the perimeter of the interval from eq.~(\ref{eq:perim_general}), with $|\alpha|=\beta$ (i.e. $P=2\beta^p+2a$), requires solving eq.~(\ref{eq:mass_general}) to find $\beta$ for a given mass $M_0$ from $M_0 = 2\beta^{p+1}/(p+1) + 2\beta a$. For most values of $p$ this is not possible -- we give an example for $p=2$ below -- although we can substitute for $\beta$ in terms of the perimeter to give an implicit expression for the mass in terms of the perimeter for any $a$ and $p$.

For $a<a_{crit}$, where the log-convex density theorem does not apply, we can describe explicit formulae for the perimeter and end-points of the isoperimetric interval only in the cases $p=\frac{1}{2}$, $p=1$ and $p=2$. We validate our predictions with simulations using the Surface Evolver~\citep{brakke92}.

\begin{figure}
     \centering
     \begin{subfigure}[b]{0.48\textwidth}
         \includegraphics[width=\textwidth]{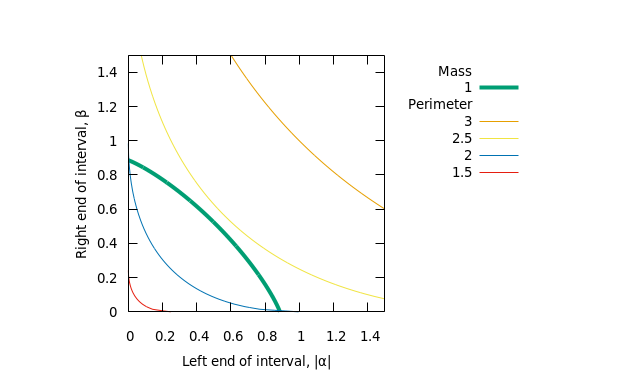}
         \caption{$p=0.5, a=0.5$}
         \label{fig:contourplots_n0.5}
     \end{subfigure}
     \hfill
     \begin{subfigure}[b]{0.48\textwidth}
         \includegraphics[width=\textwidth]{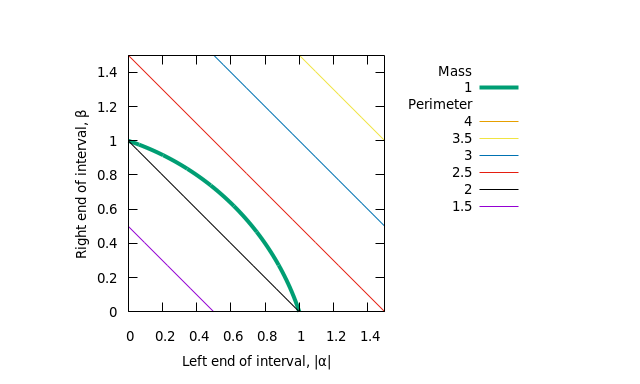}
         \caption{$p=1, a=0.5$}
         \label{fig:contourplots_n1}
     \end{subfigure}
     
     \begin{subfigure}[b]{0.48\textwidth}
         \centering
         \includegraphics[width=\textwidth]{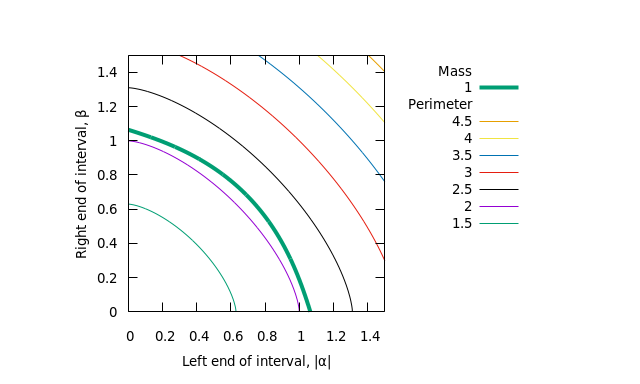}
         \caption{$p=1.5, a = 0.5$}
         \label{fig:contourplots_n1.5}
     \end{subfigure}
    \hfill
         \begin{subfigure}[b]{0.48\textwidth}
         \centering
         \includegraphics[width=\textwidth]{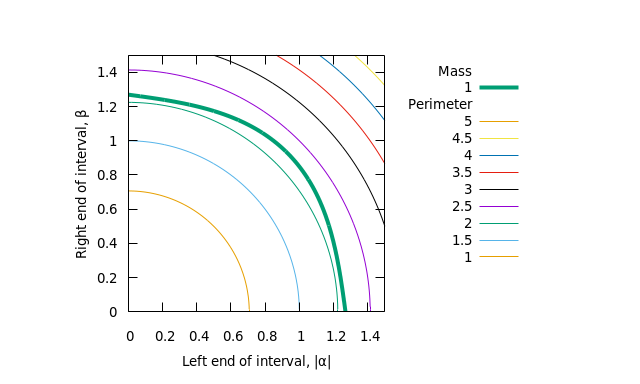}
         \caption{$p=2, a = 0.25$}
         \label{fig:contourplots_n2a}
     \end{subfigure}

     \begin{subfigure}[b]{0.48\textwidth}
         \centering
         \includegraphics[width=\textwidth]{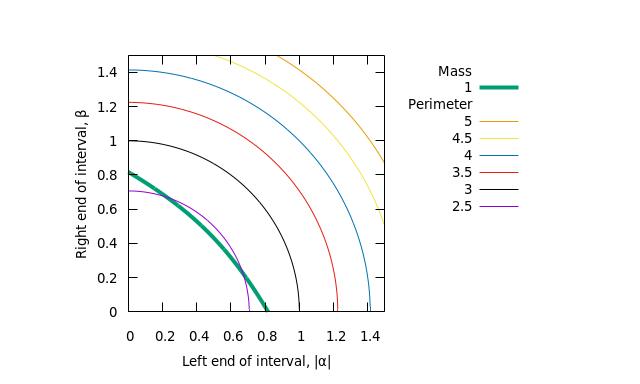}
         \caption{$p=2, a=1$}
         \label{fig:contourplots_n2b}
     \end{subfigure}
    \hfill
         \begin{subfigure}[b]{0.48\textwidth}
         \centering
         \includegraphics[width=\textwidth]{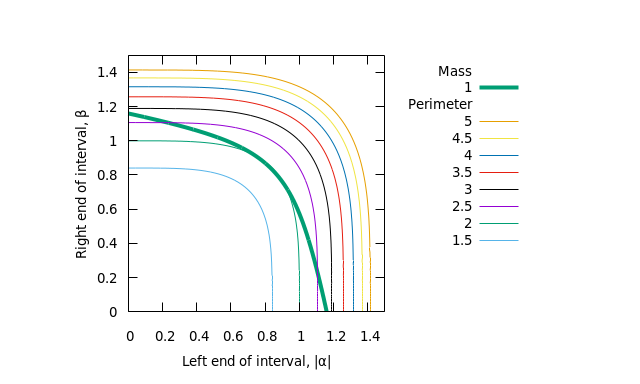}
         \caption{$p=4, a=0.2$}
         \label{fig:contourplots_n4}
     \end{subfigure}
    \caption{Contours of weighted perimeter $P(\alpha,\beta)$ for various values of $p$ and $a$, with the thick line indicating the constraint for an interval of unit weighted mass, in terms of the position of the ends of the interval.  
    }
    \label{fig:contourplots}
\end{figure}

Figure~\ref{fig:contourplots} shows numerically calculated contours of perimeter and mass, for different values of $p$ and $a$, as the endpoints $\alpha$ and $\beta$ of the interval vary. For a given mass (set equal to one in the figure), we seek the point on the mass contour with the lowest value of perimeter, from which we could read off $\alpha$ and $\beta$, or at least get an idea if the isoperimetric interval is symmetric (on the diagonal) or has one end at the origin ($\alpha = 0$ or $\beta = 0$).

For example, for $p < 1$ (figure~\ref{fig:contourplots_n0.5}), as the perimeter increases a contour of constant perimeter first meets the constraint on the axes,  indicating that the least perimeter interval has one end at the origin (as in the case $a=0$~\citep{huang2019isoperimetric}). We find that for exponents $0<p\leq 1$ the isoperimetric interval always has one end at the origin.

In the case $p=1$, the isoperimetric region again has one end at the origin. For $1<p<2$ the intersection moves in from the axes, indicating (for the value of $a$ chosen in figure~\ref{fig:contourplots_n1.5}) an asymmetric least-perimeter interval with both $\alpha$ and $\beta$ non-zero. This is a consequence of the mass contour becoming concave more rapidly with $p$ than the perimeter contours. As Lemma~\ref{lem:logconvex_allD} indicates, for exponents $p>1$ there is a critical value of $a$ above which the isoperimetric interval changes from being asymmetric, possibly with one end at the origin, to being symmetric. 
We indicate the dependence on $a$ for $p=2$ (figures~\ref{fig:contourplots_n2a} and \ref{fig:contourplots_n2b}): for small $a$ the isoperimetric interval is asymmetric, while for larger $a$ it is symmetric. That is, for large $a$ the density function becomes log-convex and Lemma~\ref{lem:logconvex_allD} applies.

For $p=\frac{1}{2}$, $p=1$ and $p=2$ we are able to derive explicit formulae for the endpoints of the isoperimetric interval $[\alpha,\beta]$ and the corresponding perimeter $P$ as $a$ varies. In the case $p=2$  we use the method of Lagrange multipliers and for $p=1$ the proof is inspired by the picture presented in figure~\ref{fig:contourplots_n1}, where the constraint on the mass can be represented as a circle. The value of $p=\frac{1}{2}$ is a special case of $p$ in the interval $(0,1)$ for which a cubic can be solved.
A summary of the way in which the perimeter and the end-points vary with the offset $a$ for $p=2$, $p=1$ and $p=\frac{1}{2}$ is shown in figure~\ref{fig:perim_ends}.
We then discuss the remaining values of $p$, for which we have been unable to find closed-form solutions.

\begin{figure}
     \centering
     \begin{subfigure}[b]{0.48\textwidth}
         \includegraphics[width=\textwidth]{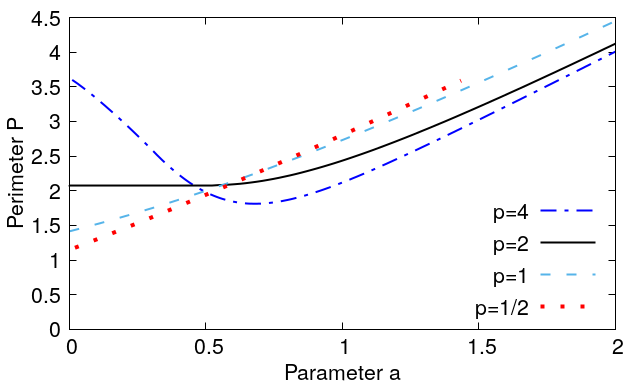}
         \caption{\quad}
         \label{fig:perim_ends1}
     \end{subfigure}
     \hfill
     \begin{subfigure}[b]{0.48\textwidth}
         \includegraphics[width=\textwidth]{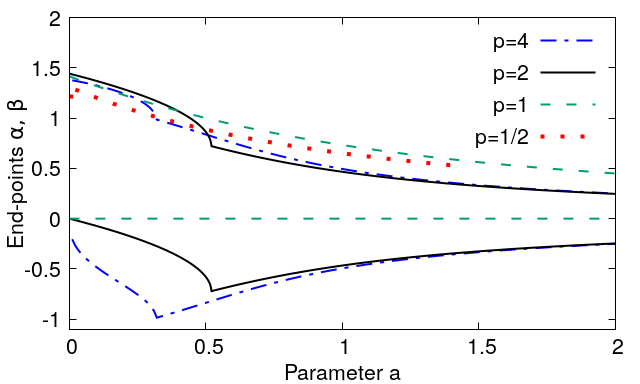}
         \caption{}
         \label{fig:perim_ends2}
     \end{subfigure}
    \caption{(a) Minimum perimeter for different values of the exponent $p$ as the parameter $a$ varies, for an interval of unit mass $M_0=1$. For $p=2$, $a_{crit}\approx 0.52$. (b) The positions of the end-points of the interval with least perimeter for different values of the exponent $p$ as the parameter $a$ varies in the case $M_0=1$. 
    In both panels the data for $p=4$ is obtained numerically, while other curves are from the equations in the text.
    }
    \label{fig:perim_ends}
\end{figure}

\subsection{The case $p=2$ in 1D}
\label{sec:resultsn2}

We use the method of Lagrange multipliers to determine the optimal solution, by writing the functional 
\begin{equation}
L(\alpha,\beta,\lambda) = P(\alpha,\beta) +
 \lambda (M(\alpha,\beta) - M_0 ) 
 \label{eq:lagrange_general}
\end{equation}
where $\lambda$ is the Lagrange multiplier and $M_0$ a target mass. We equate the derivatives of $L$ to zero in the usual way, and seek a solution for $\alpha$ and $\beta$.

\begin{lem} 
Let the density function weighting perimeter and mass be $\rho(x) =x^2+a$ for $a > 0$. The interval $[\alpha,\beta]$ of mass $M_0$ with least perimeter straddles the origin for $a < a_{crit}$, with $\alpha\beta = -a$, and is symmetric about the origin, with $\alpha=-\beta$, for $a\geq a_{crit}$. We have
\[
a_{crit} = \frac{1}{4}\left(3M_0 \right)^{2/3}
\]
and least perimeter
\[
 P = \left\{ \begin{array}{cc}
     \left(3M_0 \right)^{2/3} & a \leq a_{crit} \\
     2k+2a^2/k -2a & a \geq a_{crit},
 \end{array}\right.
\]
where $4k^{3/2} = 3M_0 +\sqrt{9M_0^2+16a^3}$. 
\end{lem}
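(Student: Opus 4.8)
The plan is to carry out the Lagrange-multiplier computation set up in eq.~(\ref{eq:lagrange_general}) with $p=2$, invoking the single-interval reduction of Lemma~\ref{lem:joinintervals} so that we may assume $\alpha\le 0\le\beta$ and write $P=\alpha^2+\beta^2+2a$ and $M=\tfrac13(\beta^3+|\alpha|^3)+(\beta+|\alpha|)a$ from eqs.~(\ref{eq:perim_general})--(\ref{eq:mass_general}). First I would form $L=P+\lambda(M-M_0)$ and set $\partial L/\partial\beta=\partial L/\partial\alpha=0$. Since $\alpha\le 0$ we have $|\alpha|=-\alpha$, so the two stationarity conditions read $2\beta+\lambda(\beta^2+a)=0$ and $2\alpha-\lambda(\alpha^2+a)=0$. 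Eliminating $\lambda$ gives $\beta(\alpha^2+a)+\alpha(\beta^2+a)=0$, which factors into the key identity
\[
(\alpha+\beta)(\alpha\beta+a)=0 .
\]
Thus every interior critical point lies on one of two branches: the \emph{symmetric} branch $\alpha=-\beta$, or the \emph{straddling} branch $\alpha\beta=-a$.

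Next I would analyse the straddling branch. Writing $s=\beta+|\alpha|$ and noting $|\alpha|\beta=a$, the sum-of-cubes identity gives $\beta^3+|\alpha|^3=s^3-3as$, so the mass collapses to $M_0=\tfrac13(s^3-3as)+as=\tfrac{s^3}{3}$; hence $s=(3M_0)^{1/3}$, \emph{independent of} $a$. Because $2a=-2\alpha\beta$ on this branch, the perimeter simplifies to $P=\alpha^2+\beta^2-2\alpha\beta=(\beta-\alpha)^2=s^2=(3M_0)^{2/3}$, giving the first line of the claimed formula. This branch is feasible only when a genuine pair $\alpha<0<\beta$ exists: since $\beta$ and $|\alpha|$ are the roots of $t^2-st+a=0$, they are real and positive precisely when $s^2\ge 4a$, i.e. $a\le\tfrac14(3M_0)^{2/3}$. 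This pins down $a_{crit}=\tfrac14(3M_0)^{2/3}$, which I would check against the $p=2$ specialisation of eq.~(\ref{eq:acrit_p_gt_1}) in Lemma~\ref{lem:logconvex_allD}; at $a=a_{crit}$ the two roots coincide and the straddling interval degenerates to the symmetric one, guaranteeing continuity of the solution across the transition.

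For $a\ge a_{crit}$ I would treat the symmetric branch $\alpha=-\beta$, where $P=2\beta^2+2a$ and the constraint becomes the depressed cubic $\beta^3+3a\beta=\tfrac{3M_0}{2}$. This cubic has positive linear coefficient and so a single real root, supplied by Cardano; the two cube-root summands have product $-a$ and sum $\beta$. Setting $k$ so that $4k^{3/2}=3M_0+\sqrt{9M_0^2+16a^3}$ makes one summand equal to $\sqrt{k}$ and the other $-a/\sqrt{k}$, so $\beta=\sqrt{k}-a/\sqrt{k}$ and $\beta^2=k-2a+a^2/k$. Substituting into $P=2\beta^2+2a$ yields $P=2k+2a^2/k-2a$, the second line of the formula.

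The main obstacle is not the algebra but establishing \emph{minimality}: the factorisation produces only stationary points, so I must confirm which branch realises the global minimum and exclude the boundary case $\alpha=0$ (which never satisfies $\alpha\beta=-a$ since $a>0$). I would do this either by reducing to a one-variable problem (eliminate $\beta$ through $M=M_0$ and examine the sign of $dP/d\alpha$) or, more directly, by comparing the two perimeter values: the comparison should show $(3M_0)^{2/3}\le 2\beta^2+2a$ whenever both branches exist, with equality exactly at $a=a_{crit}$. Hence the straddling interval is optimal for $a<a_{crit}$, while for $a\ge a_{crit}$ the density is log-convex and Lemma~\ref{lem:logconvex_allD} already certifies the symmetric interval as the genuine minimiser. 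Existence of a minimiser, ensured by Lemma~\ref{lem:joinintervals}, then closes the argument.
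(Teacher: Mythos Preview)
Your proposal is correct and follows essentially the same route as the paper: both set up the Lagrangian of eq.~(\ref{eq:Lagrange_forp2}), eliminate $\lambda$ to obtain the relation that factors as $(\alpha+\beta)(\alpha\beta+a)=0$ (the paper writes it as a quadratic in $|\alpha|$ with roots $|\alpha|=\beta$ and $|\alpha|=a/\beta$, which is the same thing), and then evaluate $P$ and $M$ on each branch, using Cardano on the symmetric cubic. Your substitution $s=\beta+|\alpha|$ and the sum-of-cubes simplification are a slightly tidier version of the paper's computation $P_{asym}=(a/\beta+\beta)^2$, $M_0=\tfrac13(a/\beta+\beta)^3$; your discriminant argument for the feasibility bound $s^2\ge4a$ matches the paper's observation that the square root in eq.~(\ref{eq:n2symbeta}) must be real. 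If anything, your closing paragraph on global minimality (comparing the two branch perimeters and invoking Lemma~\ref{lem:logconvex_allD} for $a\ge a_{crit}$) goes a little beyond what the paper's proof explicitly argues.
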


For $a=0$ one end of the isoperimetric interval is at the origin, and as $a$ increases it moves in the negative direction until a critical value of $a$ is reached, equal to the value from the log-convex prediction, eq.~(\ref{eq:acrit_p_gt_1}), at which point the interval becomes symmetric.

\begin{proof}

In this case, eq.~(\ref{eq:perim_general}) for the perimeter becomes
\begin{equation}
 P= \alpha^2 + \beta^2 +2 a
\label{eq:perimetern2}
\end{equation}
and eq.~(\ref{eq:mass_general}) for the constraint on the mass is 
\begin{equation}
 \frac{\beta^3}{3} + \beta a -\frac{|\alpha|^3}{3} -|\alpha| a = M_0.
\label{eq:constraintn2}
\end{equation} 
We then form the Lagrange function
\begin{equation}
    L(\alpha,\beta,\lambda)=\alpha^2+\beta^2+2a+\lambda \left(\frac{\beta^3}{3}+\beta a -\frac{|\alpha|^3}{3}-|\alpha|a-M_0 \right),
    \label{eq:Lagrange_forp2}
\end{equation}
with derivatives
\begin{equation}
\frac{\partial L}{\partial \alpha}=2\alpha + \lambda ( -\alpha |\alpha| - a ),
\quad
\frac{\partial L}{\partial \beta}=2\beta + \lambda ( \beta^2 + a ).
\end{equation}
Equating these to zero and eliminating $\lambda$ gives
 \begin{equation}
|\alpha|^2\beta + |\alpha|(\beta^2 + a) + a\beta = 0
 \label{eq:n2ab}
 \end{equation}
which is a quadratic for $\alpha$ with solutions
$\alpha=-\beta$, corresponding to a symmetric solution, and $\alpha =-\displaystyle\frac{a}{\beta}$,  an asymmetric solution.

We now consider each possibility in turn to determine the least perimeter for each $a$ and $M_0$. 

{\bf Asymmetric solution:}
With $\alpha = -\displaystyle\frac{a}{\beta}$ the perimeter and the mass constraint are 
\begin{equation}
P_{asym}=\left(\frac{a}{\beta} + \beta\right)^2,
\quad \quad
  M_0 = \frac{1}{3} \left(\frac{a}{\beta} + \beta \right)^3.
    \label{eq:beta_n2}
\end{equation}
So the perimeter depends only on the length of the interval,
\begin{equation}
P_{asym}=(3M_0)^\frac{2}{3},
\label{eq:perimetern2a}
\end{equation}
and is, remarkably, independent of $a$.

To find the positions of the ends of the interval we solve the cubic for $\beta$ in eq.~(\ref{eq:beta_n2}):
\begin{equation}
    \beta=\frac{1}{2} \left[ \sqrt{(3M_0)^\frac{2}{3}-4a}+(3M_0)^\frac{1}{3} \right]
    \label{eq:n2symbeta}
\end{equation}
and then the other end of the interval is at
\begin{equation}
  \alpha=-\frac{a}{\beta}=\frac{1}{2} \left[\sqrt{(3M_0)^\frac{2}{3}-4a}-(3M_0)^\frac{1}{3}\right].
    \label{eq:n2symalpha}
\end{equation}
Note that $\beta$ is strictly positive. 
From the square root in the expression above, real solutions only exist for $a \leq a_{crit}$.

{\bf Symmetric solution:} 
With $\alpha=-\beta$ the perimeter and the constraint on the mass are
\begin{equation}
P_{sym}=2\beta^2+2a,
\quad \quad
M_0 = \frac{2\beta^3}{3}+2\beta a.
\label{eq:perimetern2_s}
\end{equation}
 The cubic can be solved to give 
\begin{equation}
  \beta=-\alpha= \biggl(\frac{3M_0}{4}+\frac{1}{4}\sqrt{9M_0^2+16a^3}\biggr)^\frac{1}{3} - \frac{a}{\biggl(\displaystyle\frac{3M_0}{4}+\frac{1}{4}\sqrt{9M_0^2+16a^3}\biggr)^\frac{1}{3}}.
    \label{eq:n2symalphabeta}
\end{equation}
Substitution of this expression in eq.~(\ref{eq:perimetern2_s}) gives the perimeter:
\begin{equation}
P_{sym}=2\left(\frac{3M_0}{4}+\frac{1}{4}\sqrt{9M_0^2+16a^3}\right)^\frac{2}{3} +
\frac{2a^2}{\left(\displaystyle\frac{3M_0}{4}+\frac{1}{4}\sqrt{9M_0^2+16a^3}\right)^\frac{2}{3}}-2a.
\label{eq:perimetern2s}
\end{equation}

\end{proof}

The perimeter and the position of the end-points of the isoperimetric intervals are plotted in figure~\ref{fig:perim_ends}. Note that the perimeter of the asymmetric and symmetric interval are equal at $a=a_{crit}$.

\subsection{The case $p = 1$ in 1D}
\label{sec:resultsn1}

We now consider the density function $\rho(x) = |x|+a$. The log-convex result, Lemma~\ref{lem:logconvex_allD}, predicts that the critical value of $a$ is infinite, and so the isoperimetric interval is never symmetric about the origin. We will show that the least-perimeter solution will be an interval with one end at the origin.

By Lemma~\ref{lem:joinintervals} we may first assume $\alpha\leq0$ and $\beta>0$. The mass  of the interval $[\alpha,\beta]$ is equal to the sum of the masses of the intervals $[0,|\alpha|]$ and $[0,\beta]$. 

\begin{lem} 
Let the density function weighting perimeter and mass be $\rho(x) =|x|+a$. The interval of mass $M_0$ with least perimeter has one end at the origin and the other at $\beta = -a+\sqrt{a^2+2M_0}$. Its perimeter is $P=a+\sqrt{a^2+2M_0}$.
\end{lem}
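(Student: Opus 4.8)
The plan is to exploit the special structure that the mass constraint becomes a \emph{circle} for $p=1$, as the figure suggests. By Lemma~\ref{lem:joinintervals} we may restrict to a single interval $[\alpha,\beta]$ with $\alpha\le0\le\beta$. Writing $u=|\alpha|=-\alpha\ge0$ and $v=\beta\ge0$, the perimeter is the \emph{linear} function $P=u+v+2a$, while from eq.~(\ref{eq:mass_general}) the mass is $M=\tfrac12(u^2+v^2)+a(u+v)$. Completing the square in the constraint $M=M_0$ gives
\[
(u+a)^2+(v+a)^2 = 2M_0+2a^2,
\]
so the feasible configurations lie on a circle of radius $R=\sqrt{2M_0+2a^2}$ centred at $(-a,-a)$ in the $(u,v)$-plane. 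The problem thus reduces to minimizing a linear objective over the arc of this circle lying in the closed first quadrant $\{u\ge0,\,v\ge0\}$.

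Next I would locate the minimum on this arc. The arc is compact, so a minimizer exists; the only question is whether it is interior or on the boundary $u=0$ or $v=0$. A Lagrange-multiplier calculation in the style of Section~\ref{sec:resultsn2} (differentiating $P+\lambda(M-M_0)$) forces the unique interior stationary point to be \emph{symmetric}, $u=v$, since $|\alpha|$ contributes a constant slope $\pm1$ rather than the linear term present when $p=2$. Equivalently, parametrizing $u+a=R\cos\theta$, $v+a=R\sin\theta$ turns the objective into $P=R\sqrt2\,\sin(\theta+\tfrac{\pi}{4})$, whose stationary point in the feasible range is at $\theta=\tfrac{\pi}{4}$ (the $u=v$ point). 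The key observation is that this interior critical point is the \emph{maximum} of $P$ on the arc: the global minimum of $u+v$ on the full circle sits at the southwest point $(-a,-a)-\tfrac{R}{\sqrt2}(1,1)$, which has both coordinates negative and is therefore infeasible.

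It then follows that the constrained minimum is attained on the boundary of the quadrant, i.e.\ where $u=0$ or $v=0$, so one end of the isoperimetric interval lies at the origin. By the $u\leftrightarrow v$ symmetry the two boundary points give equal perimeter, so I may take $\alpha=0$ without loss of generality. Setting $u=0$ in the circle equation yields $(v+a)^2=a^2+2M_0$, hence $v=\beta=-a+\sqrt{a^2+2M_0}$ (the positive root, which is indeed positive since $M_0>0$), and substituting back gives $P=\beta+2a=a+\sqrt{a^2+2M_0}$, as claimed. One should also check that the circle genuinely meets both axes inside the first quadrant, which holds precisely because $R>a\sqrt2$ whenever $M_0>0$.

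The step I expect to be the main obstacle is handling the non-differentiability of $\rho(x)=|x|+a$ at the origin, where the perimeter functional has a kink, so naive Lagrange stationarity cannot be applied there. Recasting the problem in the variable $u=|\alpha|\ge0$ relocates this kink to the boundary $u=0$ of the feasible region, which is exactly where the optimum turns out to lie; this makes the $p=1$ optimum a \emph{boundary} (constrained) minimum, in contrast to the interior optimum found for $p=2$. The crux is therefore to argue the boundary minimum rigorously: establishing existence by compactness, and excluding the interior critical point by identifying it as a maximum of $P$ along the arc.
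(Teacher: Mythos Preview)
Your proposal is correct and follows essentially the same route as the paper: both reduce to minimizing the linear objective $u+v+2a$ over the circular arc $(u+a)^2+(v+a)^2=2(M_0+a^2)$ in the first quadrant, parametrize by $\theta$, observe that the unique interior critical point $\theta=\pi/4$ is a maximum, and conclude the minimum lies at the boundary $u=0$. Your extra remarks on the non-differentiability at the origin and the check that the arc actually meets the axes ($R>a\sqrt{2}$) are welcome additions but do not change the argument.
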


\begin{proof}
With $p=1$, eq.~(\ref{eq:perim_general}) for the perimeter becomes
\begin{equation}
 P= |\alpha| + \beta +2 a
\label{eq:perimetern1}
\end{equation}
and eq.~(\ref{eq:mass_general}) for the constraint on the mass becomes
\begin{equation}
  M_0 = \frac{\alpha^2}{2} + \frac{\beta^2}{2}+ a (|\alpha| + \beta) .
  \label{eq:constraintn1}
\end{equation}
For given mass $M_0$ and offset $a$ we can visualize this constraint as a circle, as shown in figure~\ref{fig:contourplots_n1}, since eq. ~(\ref{eq:constraintn1}) can be written as
\[
\alpha^2 + \beta^2 + 2a |\alpha| + 2a\beta = 2M_0
\]
or, after completing the square twice,
\[
(\beta+a)^2 + (|\alpha|+a)^2 = 2M_0 +2a^2.
\]
The circle has centre $(-a,-a)$ and radius $R=\sqrt{2(M_0+a^2)}$.

We introduce a change of variable: let $|\alpha|+a=R \cos\theta$ and $\beta+a = R\sin\theta$. The perimeter becomes $P(\theta) = R(\cos\theta+\sin\theta)$. We seek a minimum of $P(\theta)$ for $\theta \in [\pi/4,\cos^{-1}(a/R)]$, given that we have assumed $|\alpha|<\beta$. Equating ${\rm d}P/{\rm d\theta}$ to zero gives $\tan\theta=1$, and the second derivative shows that this solution, $\theta = \pi/4$, is a maximum. Therefore $P$ attains its minimum on the boundary of the domain, i.e. for $\theta = \cos^{-1}(a/R)$. Then $\alpha=0$ and one end of the isoperimetric interval is at the origin. The other end is at 
\[
\beta=-a+R\sin\theta = -a+R\sqrt{1-a^2/R^2}=-a+\sqrt{a^2+2M_0},
\]
and the perimeter, from  eq.~(\ref{eq:perimetern1}), is 
\begin{equation}
P=\beta+2a = a+\sqrt{a^2 + 2M_0}.
\label{eq:isoperim_n1}
\end{equation}

\end{proof}

These results are supported by the data in figure~\ref{fig:contourplots_n1}, where lines of constant perimeter are straight with slope -1. In effect, these (extended) lines are chords of the constraint circle. The least value of the perimeter that solves both eqs~(\ref{eq:perimetern1}) and (\ref{eq:constraintn1}) will be where the chord first intersects the circle moving from left to right, i.e. at the axes. 

The perimeter is plotted in figure~\ref{fig:perim_ends1}: $P$  increases as $a$ increases and asymptotes to a line with slope 2 as $a \to\infty$. The positions of the end-points of the corresponding optimal interval are shown in figure~\ref{fig:perim_ends2}.

\subsection{The case $0< p < 1$ in 1D}
\label{sec:resultsn01}

\begin{lem} 
Let the density function weighting perimeter and mass be $\rho(x) =|x|^p+a$ with $0 < p < 1$. The interval of weighted mass $M_0$ with least weighted perimeter has one end at the origin and the other at $x=\beta$ where $\beta$ is the solution of $\beta^{p+1}=(p+1)(M_0-a \beta)$. Its perimeter is $P=\beta^p+2a$.
\end{lem}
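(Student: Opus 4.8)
The plan is to reduce the problem, via Lemma~\ref{lem:joinintervals}, to minimising the perimeter of a single interval straddling the origin, and then to exploit the concavity of $t\mapsto t^{p}$ for $0<p<1$. Writing $u=|\alpha|\geq 0$ and $v=\beta>0$, the task is to minimise $f(u)+f(v)+2a$ subject to $h(u)+h(v)=M_0$, where $f(t)=t^{p}$ and $h(t)=\frac{t^{p+1}}{p+1}+at$ are the per-endpoint contributions to perimeter and mass respectively. Since $h$ is smooth, strictly increasing and maps $[0,\infty)$ bijectively onto $[0,\infty)$ with $h(0)=0$, I would change variables to $s=h(u)$ and $w=h(v)$, so that the constraint becomes the linear relation $s+w=M_0$ with $s,w\geq 0$, and the objective becomes $\phi(s)+\phi(w)$ with $\phi=f\circ h^{-1}$.

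The key step is to show that $\phi$ is concave. By the chain rule $\phi'(s)=\frac{f'(u)}{h'(u)}$ with $u=h^{-1}(s)$, and here $\frac{f'(t)}{h'(t)}=\frac{p\,t^{p-1}}{t^{p}+a}=:p\,g(t)$. A direct differentiation gives $g'(t)=\frac{t^{p-2}\bigl((p-1)a-t^{p}\bigr)}{(t^{p}+a)^{2}}$, whose numerator is strictly negative for every $t>0$ once $0<p<1$ (because then $(p-1)a<0$). Thus $g$ is strictly decreasing, and composing with the increasing function $h^{-1}$ shows that $\phi'(s)=p\,g(h^{-1}(s))$ is strictly decreasing in $s$, so $\phi$ is strictly concave on $[0,M_0]$.

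It then follows that $\psi(s):=\phi(s)+\phi(M_0-s)$ is strictly concave on $[0,M_0]$, so its minimum is attained only at an endpoint $s\in\{0,M_0\}$, that is at $u=0$ or $v=0$. The interior critical point $s=M_0/2$, corresponding to the symmetric interval $\alpha=-\beta$, is in fact the \emph{maximum} of $\psi$, mirroring how the symmetric angle $\theta=\pi/4$ turned out to be a maximum in the $p=1$ case. Hence the least-perimeter interval has one end at the origin. Setting $u=0$ leaves the single remaining endpoint $\beta=v$ determined by $h(\beta)=M_0$, i.e. $\frac{\beta^{p+1}}{p+1}+a\beta=M_0$, which rearranges to $\beta^{p+1}=(p+1)(M_0-a\beta)$; the perimeter is $f(0)+f(\beta)+2a=\beta^{p}+2a$, as claimed.

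The main obstacle I anticipate is the concavity verification, since everything hinges on the sign of the numerator of $g'$, which is exactly where the hypothesis $0<p<1$ enters (for $p>1$ the sign changes once $t^{p}$ exceeds $(p-1)a$, the threshold producing the critical radius and the possibility of a symmetric minimiser recorded in Lemma~\ref{lem:logconvex_allD}). I would also note that existence of a minimiser is not in question for radial increasing densities on the line, and would flag the mild technical point that $\phi'(s)\to\infty$ as $s\to 0^{+}$; this is harmless for the concavity conclusion and merely reflects the vertical tangent of $t\mapsto t^{p}$ at the origin.
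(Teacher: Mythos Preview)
Your argument is correct and is actually cleaner than the paper's. The paper works directly in the $(|\alpha|,\beta)$ plane: it differentiates the level sets implicitly and shows that, for $0<p<1$, the perimeter contour $|\alpha|^{p}+\beta^{p}=P_0-2a$ is convex as a graph $\beta(|\alpha|)$ while the mass contour is concave, then argues geometrically that an expanding perimeter contour first meets the fixed mass contour on an axis, i.e.\ at $\alpha=0$. Your route is different: you push the constraint into the coordinates via $s=h(u)$, $w=h(v)$, and reduce everything to the single scalar fact that $\phi=f\circ h^{-1}$ is strictly concave because $\phi'(s)=\dfrac{p\,t^{p-1}}{t^{p}+a}$ is decreasing in $t$ (and hence in $s$). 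The computation of the sign of $g'(t)$ is exactly the log-convexity discriminant $(p-1)a-t^{p}$ from eq.~(\ref{eq:psi}), which neatly explains why $p<1$ forces the endpoint minimum while $p>1$ leaves room for a symmetric minimiser once $t^{p}<(p-1)a$. What your approach buys is a fully rigorous one-line conclusion (concave function on a segment is minimised at an endpoint), whereas the paper's ``first intersection on the axis'' step is pictorially clear but would still need a short argument to be airtight. What the paper's picture buys is a direct visual link to the contour plots in figure~\ref{fig:contourplots}.
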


The proof of the lemma is based on the shape of the contours shown in figure~\ref{fig:contourplots_n0.5}. The argument works only for $p< 1$, for which the contours of constant perimeter are convex. Since the contours of constant mass are concave, they first meet when one end of the interval is at the origin.

\begin{proof}

On a contour of constant perimeter $P=P_0$, we can differentiate eq.~(\ref{eq:perim_general}) to show that 
\begin{equation}
\frac{d^2\beta}{d| \alpha |^2}= - \frac{(p-1)}{\beta^{p-1}}\left( | \alpha|^{p-2} + \frac {| \alpha |^{2n-2}}{\beta^p}\right).
     \label{eq:2ndDiffBP}
\end{equation}
This is positive for $0<p<1$, so the contour is convex.
Similarly, for a contour of constant mass $M$, we have
\begin{equation}
    \frac{d^2\beta}{d| \alpha |^2}=-\frac{1}{(\beta^p + a)^3}\left(p|\alpha|^{p-1}(\beta^p + a)^2 + p\beta^{p-1}(|\alpha|^p + a)^2\right),
     \label{eq:2ndDiffBG}
\end{equation}
which is negative for $0<p<1$,and so this contour is concave.

The contours are symmetric about the line $\beta=|\alpha|$, since the density function is symmetric about the origin. As $a$ increases, the perimeter increases and the contours move towards each other. They first intersect in the region $|\alpha| < \beta$ when $\alpha=0$. 
The other end of the interval $x=\beta$ is the solution of the constraint for the mass, eq.~(\ref{eq:mass_general}), with $\alpha=0$:
 \begin{equation}
\beta^{p+1} = (p+1)(M_0-a \beta),
   \label{eq:endpt_n_less_1}
 \end{equation}
 and then the weighted perimeter of the isoperimetric interval is 
 \begin{equation}
     P = \beta^p + 2a = (p+1)\left(\frac{M_0}{ \beta}-a\right) + 2a.
     \label{eq:perim_n_less_1}
 \end{equation}  
 \end{proof}
 
In general, numerical methods are required to explicitly calculate $\beta$, and hence $P$. An exception is the case $p=\frac{1}{2}$ for which  eq.~(\ref{eq:endpt_n_less_1}) is a cubic for $\sqrt{\beta}$ and can therefore be solved:
\[
\beta = \left (\frac{a^2}{4Z^\frac{1}{3}} + Z^\frac{1}{3}-\frac{a}{2} \right)^2
\]
where
\[
Z = \left(\frac{3M}{4}-\frac{a^3}{8}\right) -\sqrt{\left(\frac{3M}{4}-\frac{a^3}{8}\right)^2-\frac{a^6}{64}},
\]
which is valid until the discriminant becomes negative, at $a = (3M_0)^{1/3}$.
The perimeter is plotted in figure~\ref{fig:perim_ends1} in this case, and the positions of the end-points are shown in figure~\ref{fig:perim_ends2}.

\subsection{Other values of $p$ in 1D} \label{sec:resultsn>2}

For $p>2$ the log-convex result, eq.~(\ref{eq:acrit_p_gt_1}), gives the minimum value of $a$ for which the isoperimetric interval is symmetric. For smaller $a$, we expect an asymmetric interval. A numerical solution using the Surface Evolver software~\citep{brakke92}, shown in figure~\ref{fig:perim_ends}, gives an example in the case $p=4$. 
In particular, note that the perimeter decreases with increasing $a$ for the asymmetric interval. The minimum of perimeter is found at the critical value of $a$ only for $p=2$; for larger $p$ the minimum is found at larger $a$.

For values $1<p<2$ the idea is similar. There is a symmetric interval for large $a$, above the value given in eq.~(\ref{eq:acrit_p_gt_1}), and below this there is an asymmetric interval that straddles the origin. The perimeter increases monotonically with $a$, as would be expected from a visual interpolation of the curves in figure~\ref{fig:perim_ends1}.

We have not been able to derive explicit formulae for $\alpha , \beta $ and the perimeter $P$ in these cases.

For $p<0$ we have not found isoperimetric intervals for any $a>0$, just as there are no isoperimetric solutions for density $|x|^p$ for $-2\leq p<0$~\citep{carroll2008isoperimetric}.

\section{Isoperimetric solutions in the plane (2D)} \label{Ch:2D} 

We next consider two-dimensional bubbles, not necessarily circular, in the plane, with the same density function $\rho(r) = r^p+a$. Such isoperimetric regions exist~\citep{rosales2008isoperimetric}, and we assume that the  region is connected, and that it includes the origin (possibly on the periphery) since the density increases monotonically away from the origin. As in 1D, we find that we can make significant progress only for a small number of values of $p$, specifically $p=2$, where the pattern as $a$ increases of touching the origin, straddling the origin asymmetrically, and being centred on the origin is similar to the 1D case.

\begin{mydef} 
In the plane $\mathbb{R}^2$, a radially symmetric density function $\rho$ is a positive continuous function that weights each point $(r,\theta)$ in $\mathbb{R}^2$ with density $\rho(r)$.  Given a region $r(\theta) \le R(\theta)  \subset\mathbb{R}^2$, the weighted perimeter and mass of the region are given by:
\begin{equation}
    P = \int_{-\pi}^\pi {\rm d}\theta  \,\, R(\theta) \; \rho(R(\theta)) 
\end{equation}
and 
\begin{equation}
    M = \int_{-\pi}^\pi {\rm d}\theta \int_0^{R(\theta)} {\rm d}r \,  r \rho(r) .
    \label{eq:mass2d_general}
\end{equation}
\end{mydef}

In 2D, Lemma~\ref{lem:logconvex_allD} gives the critical value of $a$ above which the density function is log-convex and hence above which the isoperimetric region is a circle with centre at the origin:
\begin{equation}
    a_{crit}=\left(\frac{p+2}{3\pi p}\right)^\frac{p}{p+2} (p-1)^{-\frac{2}{p+2}} M^\frac{p}{p+2}.
    \label{eq:2Dacrit}
\end{equation}
Note that this expression is only valid for $p>1$.
In this symmetric case, the perimeter of the region is
\begin{equation}
     P=2\pi (R^{p+1}+Ra)
\label{eq:RpMax_P}
 \end{equation}
where, for given mass $M_0$, the radius $R$ of the circle is found from 
\begin{equation}
     M_0=2\pi R^2 \left(\frac{R^p}{p+2}+\frac{a}{2} \right).
\label{eq:RpMax_M}
 \end{equation}

When $a=0$ we expect the isoperimetric region to be a circle that {\em touches} the origin~\citep{dahlberg2010isoperimetric}. For intermediate values of $a$ between 0 and $a_{crit}$ the centre of the circle may shift and its radius, or even its shape, change.

If the isoperimetric region is circular, with centre at $r=r_0>0$, 
then its perimeter and mass are
\begin{equation}
P= \int_{-\pi}^{\pi} {\rm d}\theta \,\,  R \left[ (R^2+r_0^2+2R r_0 \cos\theta)^{p/2}+a\right] .
\label{eq:2Dcircle_general_P}
\end{equation}
and
\begin{equation}
M=\int_{0}^{R}  {\rm d}q \int_{-\pi}^{\pi} {\rm d}\theta \,\,  q \left[ (q^2+r_0^2+2q r_0 \cos\theta)^{p/2}+a \right].
\label{eq:2Dcircle_general_M}
\end{equation}
We can only evaluate these integrals for very few values of $p$. Plotting numerically calculated contours of perimeter and mass would again indicate the values of $r_0$ and $R$ at which the isoperimetric region might be found -- if it were circular. Even then, our examination of such contours does not suggest a novel method for approaching the problem for particular values of $p$ (as it did in 1D for $p=1$).

We examine the case $p=2$ below, using the method of Lagrange multipliers to find the perimeter of the (assumed circular) isoperimetric region for $a<a_{crit}$. There are two alternative methods that we might employ, although they turn out to be intractable for our density function. We nonetheless describe them briefly.

Let $\kappa$ be the classical (Riemannian) curvature of a plane curve $r(\theta)$, and $\underline{n}$ its normal, and again write the density as $\psi = e^{\rho(r)}$.  \cite{corwin2006differential} defines the generalised curvature:
\begin{equation}
\kappa_\psi=\kappa -\frac{d\psi}{dn}
\label{eq:kappa-gen}
\end{equation}
and proves that an isoperimetric curve has 
$\kappa_\psi$ constant, where
\begin{equation}
\kappa_{\psi}=\frac{r^2+2{\dot r}^2-r \ddot r}{(r^2+{\dot r}^2)^{3/2}}+\frac{{\rm d}\psi}{{\rm d}r}\frac{r}{{(r^2+{\dot r}^2)^{1/2}}}.
\label{eq:kappa-psi-r1}
\end{equation}

In our case, ${\rm d}\psi/{\rm d}r= p r^{p-1}/(r^p+a)$, which appears too complicated to offer solutions to this differential equation. We conjecture that $\kappa_\psi$ and the Lagrange multiplier $\lambda$ are equivalent, since $\lambda$ can be thought of as the (``weighted") bubble pressure, and pressure and curvature are the same up to a constant factor.

The second approach that we mention is due to \cite{kolesnikov2011isoperimetric}. They show that an isoperimetric region described by $\theta = f(r)$ in the plane satisfies:
\begin{equation}
\left[ \frac{r^2 f'}{\sqrt{1+r^2(f')^2}} \right]^\prime +\frac{{\rm d}\psi}{{\rm d} r} \left[\frac{r^2f'}{\sqrt{1+r^2(f^\prime)^2}}\right] = cr,
\label{eq:Kol-f2}
\end{equation}
where $c$ is a constant that we identify as the generalized curvature $\kappa_\psi$. So it appears that eqs.~(\ref{eq:kappa-psi-r1}) and (\ref{eq:Kol-f2}) are equivalent.

\subsection {The case $p=2$ in 2D}
\label{sec:2Da}

\begin{conj} 
Let the density function weighting perimeter and mass be $\rho(r) =r^2+a$ for $a > 0$. The isoperimetric solution for a single region of weighted mass  $M_0$ is a circle which straddles the origin for  $a < a_{crit}$, with centre a distance $r_0$ from the origin, and is symmetric about the origin for $a\geq a_{crit}$, where
\[
a_{crit} = \left( \frac{2M_0}{3\pi }\right)^\frac{1}{2}.
\]
The perimeter is
\begin{equation}
 P = \left\{ \begin{array}{ccc}
     4\pi\left(\displaystyle\frac{2M_0}{3\pi} \right)^{3/4} & a \leq a_{crit} & \\
     2\pi\left( -a + \sqrt{a^2+\displaystyle\frac{2M_0}{\pi}} \right)^\frac{1}{2}\sqrt{a^2+\displaystyle\frac{2M_0}{\pi}} & a \geq a_{crit}& 
 \end{array}\right.
 \label{eq:2D_p2_perimeter}
\end{equation}
and the radius is
\begin{equation}
     R = \left\{ \begin{array}{ccc}
     \left(\displaystyle\frac{2M_0}{3\pi} \right)^{1/4} & a \leq a_{crit} \\
    \left( -a + \sqrt{a^2+\displaystyle\frac{2M_0}{\pi}} \right)^\frac{1}{2} & a \geq a_{crit}& 
 \end{array}\right.
\label{eq:2D_p2_radius}
\end{equation}
with $r_0= \sqrt{R^2-a}$ for $a\le a_{crit}$.
\end{conj}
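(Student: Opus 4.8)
The plan is to split the argument into the two regimes singled out by the piecewise formulae, and in each to reduce the problem to a two-variable computation once the region is assumed circular. For $a \ge a_{crit}$ I would invoke Lemma~\ref{lem:logconvex_allD} directly: setting $p=2$ in eq.~(\ref{eq:2Dacrit}) gives precisely $a_{crit}=(2M_0/3\pi)^{1/2}$, so the log-convex density theorem already guarantees that the minimiser is a disk centred on the origin, and no further shape argument is needed. It then only remains to compute its radius and perimeter. Putting $p=2$ in the mass relation eq.~(\ref{eq:RpMax_M}) gives a quadratic in $R^2$, namely $\tfrac{\pi}{2}R^4+\pi a R^2=M_0$, whose positive root is $R^2=-a+\sqrt{a^2+2M_0/\pi}$; substituting into eq.~(\ref{eq:RpMax_P}) written as $P=2\pi R(R^2+a)$, and using $R^2+a=\sqrt{a^2+2M_0/\pi}$, yields the lower branch of both formulae.

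For $a < a_{crit}$ the density is no longer log-convex over the whole region (from eq.~(\ref{eq:psi}), convexity holds only for $r^2<a$, whereas the region reaches out to $r_0+R>R=a_{crit}^{1/2}>\sqrt a$), so Lemma~\ref{lem:logconvex_allD} does not apply and I would proceed conjecturally, assuming the minimiser is an off-centre disk of radius $R$ whose centre lies a distance $r_0$ from the origin. The crucial simplification is that for $p=2$ the integrands in eqs.~(\ref{eq:2Dcircle_general_P}) and (\ref{eq:2Dcircle_general_M}) reduce to $q^2+r_0^2+2qr_0\cos\theta+a$, so the $\cos\theta$ terms integrate to zero and the rest is elementary, giving $P(R,r_0)=2\pi R(R^2+r_0^2+a)$ and $M(R,r_0)=\tfrac{\pi}{2}R^4+\pi(r_0^2+a)R^2$. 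Forming $L=P+\lambda(M-M_0)$ as in eq.~(\ref{eq:lagrange_general}), the equation $\partial L/\partial r_0=0$ factors as $2\pi R r_0(2+\lambda R)=0$, giving either the symmetric branch $r_0=0$ or $\lambda=-2/R$; feeding the latter into $\partial L/\partial R=0$ collapses it to $R^2-r_0^2-a=0$, i.e. $r_0=\sqrt{R^2-a}$, exactly the claimed relation. Along this asymmetric branch $R^2+r_0^2+a=2R^2$, so $P=4\pi R^3$ is independent of $a$, and the constraint reduces to $M_0=\tfrac{3\pi}{2}R^4$, delivering $R=(2M_0/3\pi)^{1/4}$ and the upper perimeter branch.

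A few consistency checks then close the loop. The asymmetric branch is real precisely when $R^2\ge a$, i.e. $a\le(2M_0/3\pi)^{1/2}=a_{crit}$, and at $a=a_{crit}$ one has $r_0=0$ with the two perimeter expressions coinciding, so the solution passes continuously into the symmetric regime; at $a=0$ it gives $r_0=R$, meaning the circle passes through the origin, which recovers the result of \cite{dahlberg2010isoperimetric}. I would also compare the two critical perimeters to confirm that the off-centre disk beats the centred one throughout $a<a_{crit}$ (at $a=0$ the ratio $P_{sym}/P_{asym}=3^{3/4}/2>1$, for instance), so that among disks the asymmetric branch is genuinely the minimiser below $a_{crit}$ and the boundary point $r_0=0$ takes over above it.

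The main obstacle is exactly the hypothesis I have had to assume: that the true isoperimetric region is a circular disk. For $a\ge a_{crit}$ this is guaranteed by the log-convex theorem, but for $0<a<a_{crit}$ the density is not log-convex near the region and I know of no argument forcing the free boundary to be a circle. Establishing it rigorously would require solving the constant generalised-curvature condition eq.~(\ref{eq:kappa-psi-r1}), whose nonlinear form for $\rho=r^2+a$ (with ${\rm d}\psi/{\rm d}r=2r/(r^2+a)$) appears intractable. This is precisely why the statement is framed as a conjecture rather than a theorem.
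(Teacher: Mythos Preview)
Your proposal is correct and follows essentially the same route as the paper: in the symmetric regime you invoke Lemma~\ref{lem:logconvex_allD} and solve the quadratic from eq.~(\ref{eq:RpMax_M}); in the asymmetric regime you evaluate eqs.~(\ref{eq:2Dcircle_general_P})--(\ref{eq:2Dcircle_general_M}) for $p=2$, form the Lagrangian, and obtain $\lambda=-2/R$, $R^2=r_0^2+a$, exactly as the paper does. Your additional consistency checks (continuity at $a_{crit}$, recovery of the Dahlberg et~al.\ result at $a=0$, and the explicit perimeter comparison $P_{sym}>P_{asym}$ for $a<a_{crit}$) go slightly beyond what the paper spells out, and your closing paragraph correctly identifies the same obstruction the paper flags---that circularity of the minimiser below $a_{crit}$ is assumed rather than proved, which is why the statement is a conjecture.
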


These predictions are shown in figure~\ref{fig:PR2a}, and compared with Surface Evolver simulations that do not assume a fixed circular shape. The simulations do appear to show a circle, and the agreement with the conjectured expressions above for the perimeter and radius are compelling.

To derive these predictions, we proceed as follows. In the symmetric case, we have the critical $a$ (from eq.~(\ref{eq:2Dacrit})), $a_{crit} = \sqrt{2M/3\pi} \approx 0.46$, and an expression for the mass (from eq.~(\ref{eq:RpMax_M}), $M_0=\frac{\pi}{2}(R^4 + 2R^2a)$. The latter is a quadratic for $R^2$, giving the second part of eq.~(\ref{eq:2D_p2_radius}). The perimeter in the symmetric case is given by $P=2\pi R (R^2+a)$, from eq.~(\ref{eq:RpMax_P}), which when the expression for $R$ is substituted gives the second expression in eq.~(\ref{eq:2D_p2_perimeter}).

For $a<a_{crit}$, assume that the isoperimetric region is a circle with  centre a distance $r_0 > 0$ from the origin and radius $R$. That is, the circle is not centred on the origin. Evaluating eqs.~(\ref{eq:2Dcircle_general_P}) and (\ref{eq:2Dcircle_general_M}) in this case gives
the perimeter and mass:
\[
P=2\pi (R^3+R r_0^2+Ra),
\quad \quad
M=\frac{\pi}{2} \left( R^4 + 2R^2 r_0^2 +2R^2a \right).
\] 
Then for a fixed target mass $M_0$ we form the Lagrangian $ L=P +\lambda (M-M_0)$ and set its derivatives with respect to $R$ and $r_0$ to zero to give
\begin{equation}
   3 R^2 + r_0^2 + a +\lambda R (R^2 + r_0^2 +a)=0
\quad \mbox{ and } \quad
  1 +  \frac{1}{2} \lambda R=0
   \label{eq:Lpa}
\end{equation}
and hence 
\begin{equation}
     \lambda = -\frac{2}{R}
     \quad \mbox{ and } \quad
     R^2=r_0^2+a.
   \label{eq:Pp2}
\end{equation}
Substituting for $R^2=r_0^2+a$ gives the mass $M = \frac{3}{2} \pi R^4$, which leads to the expression for $R$ given in the first part of eq.~(\ref{eq:2D_p2_radius}). Similarly, the perimeter is $P=4\pi R^3$, which gives the first part of eq.~(\ref{eq:2D_p2_perimeter}).

As confirmed in figure~\ref{fig:PR2a}, the case $p=2$ behaves in 2D much as it does in 1D: there is a critical value of $a$, dependent on the mass, below which the centre of the isoperimetric circle moves towards the origin as $a$ increases, and above which it is centred on the origin.

\begin{figure}
   \centering
    \begin{subfigure}[b]{0.45\textwidth}
         \includegraphics[width=\textwidth]{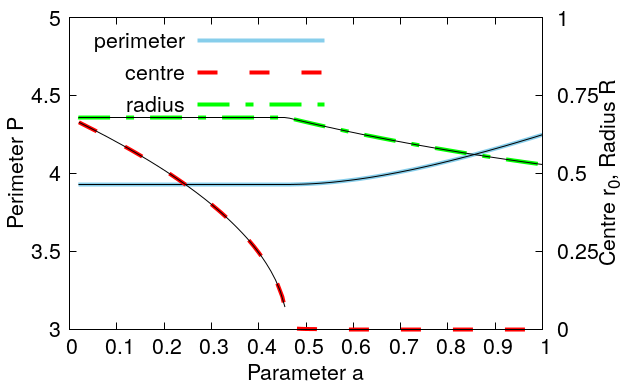}
         \caption{}
         \label{fig:PR2a}
     \end{subfigure}
     \hfill
     \begin{subfigure}[b]{0.45\textwidth}
         \includegraphics[width=\textwidth]{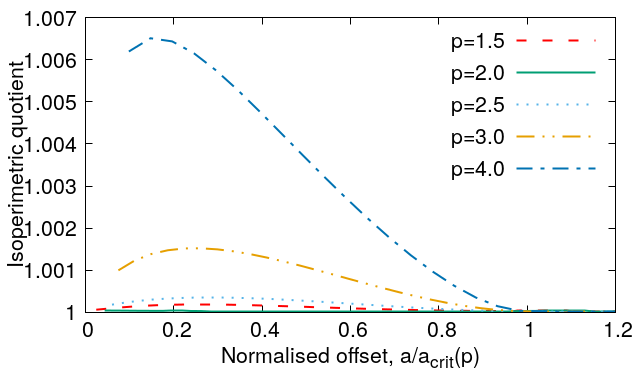}
         \caption{}
         \label{fig:PR2b}
     \end{subfigure}
    \caption{(a) Perimeter, radius and centre position of the conjectured isoperimetric circle for $p=2$, with a mass of $M=1$. The predictions from eqs.~(\ref{eq:2D_p2_perimeter}) and ~(\ref{eq:2D_p2_radius}) are shown as thin black lines and compared with the result of Surface Evolver simulations, shown as thick coloured lines.
    The agreement is excellent, suggesting that the isoperimetric solution is indeed a circle.
    (b) The isoperimetric quotient, formed from the unweighted perimeter and area of the simulated region, $P_u/\sqrt{4A_u\pi}$ {\em versus} the offset $a$ normalised by its critical value for each $p$, providing further evidence that only for $p=2$ is the isoperimetric region circular.
    }
  \label{fig:PR2}
\end{figure}

\subsection {The cases $p \ne 2$ in 2D}
\label{sec:2D_otherp}

For $p\le1$ the log-convex analysis leading to eq.~(\ref{eq:acrit_p_gt_1_allD}) is not relevant, since the density function is never log-convex (in the limit as $p$ decreases to one, $a_{crit}$ tends to infinity).
Indeed, it is not clear what the isoperimetric solutions are for $p<1$, although they are not circles.

For $p \ge 1$, numerical solutions with Surface Evolver are shown in figure~\ref{fig:2D_pics}. All figures are for the same mass -- so that the apparent size of the region increases with $p$ -- and for the same small value of $a$ -- less than the critical value of $a$ for each $p$ given in eq.~(\ref{eq:2Dacrit}). While for $p\le2$ the isoperimetric regions appear circular, we show the  isoperimetric ratio of these shapes in Figure~\ref{fig:PR2b}, which confirms that  for $p \ne 2$ they are not circular.

For $p=1$ the numerical solution suggests that the isoperimetric region is not quite circular. As $a$ increases, the approximate centre of the region moves towards the origin, but never reaches it.

For $p>2$ the isoperimetric region becomes noticeably ovoid. We have not been able to predict the shape of this region; it may be closely related to an unduloid, proposed as the boundary of an isoperimetric region in a sector with density $r^p$~\citep{diaz2012isoperimetric}.

\begin{figure}
     \centering
     \begin{subfigure}[b]{0.25\textwidth}
         \includegraphics[width=\textwidth]{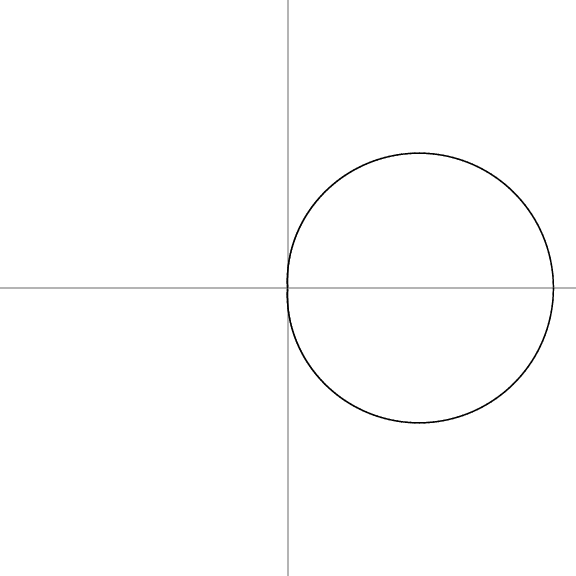}
         \caption{$p=1$}
         \label{fig:2d_pics_p1}
     \end{subfigure}
     \hfill
      \begin{subfigure}[b]{0.25\textwidth}
         \includegraphics[width=\textwidth]{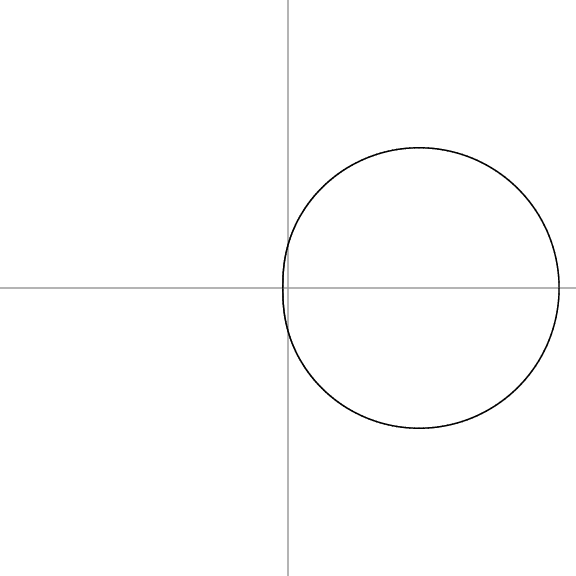}
         \caption{$p=1.5$}
         \label{fig:2d_pics_p1.5}
     \end{subfigure}
     \hfill
     \begin{subfigure}[b]{0.25\textwidth}
         \includegraphics[width=\textwidth]{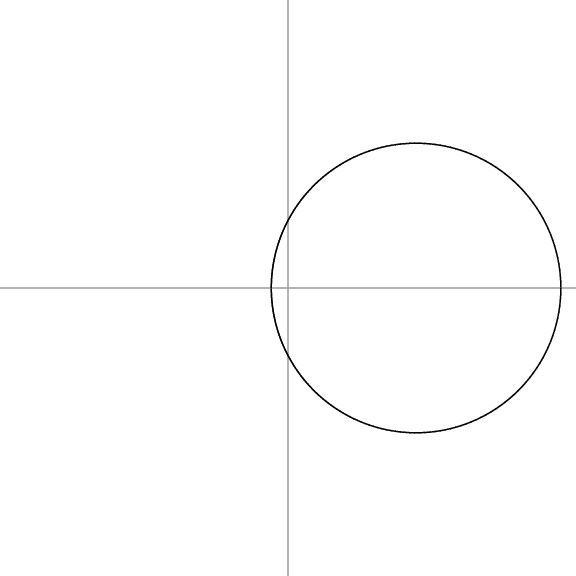}
         \caption{$p=2$}
         \label{fig:2d_pics_p2}
     \end{subfigure}

     \begin{subfigure}[b]{0.25\textwidth}
         \includegraphics[width=\textwidth]{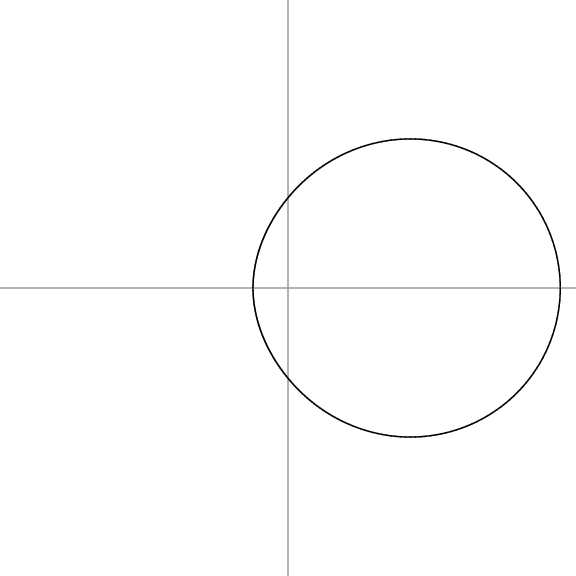}
         \caption{$p=2.5$}
         \label{fig:2d_pics_p2.5}
     \end{subfigure}
     \hfill
     \begin{subfigure}[b]{0.25\textwidth}
         \includegraphics[width=\textwidth]{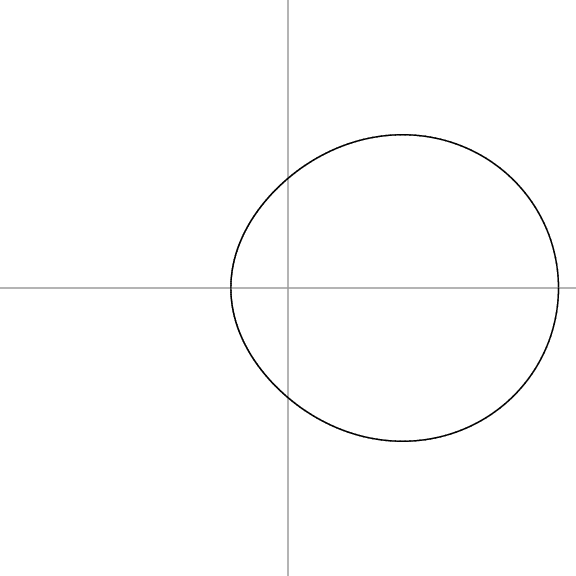}
         \caption{$p=3$}
         \label{fig:2d_pics_p3}
     \end{subfigure}
     \hfill
     \begin{subfigure}[b]{0.25\textwidth}
         \includegraphics[width=\textwidth]{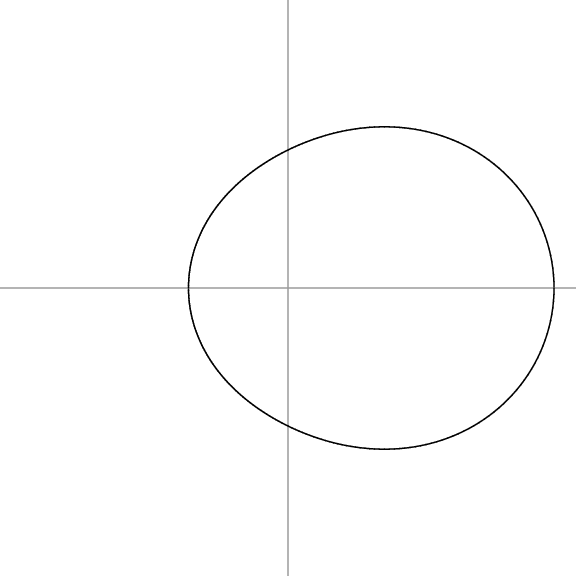}
         \caption{$p=4$}
         \label{fig:2d_pics_p4}
     \end{subfigure}
     \hfill

    \caption{Isoperimetric regions in 2D calculated wth Surface Evolver for several values of $p$. The offset is $a=0.1$, and the mass is $M=1$.
    The axis of symmetry, and the centre of the region, is constrained to lie on the $x$-axis, without loss of generality.
    }
  \label{fig:2D_pics}
\end{figure}

\section{Isoperimetric solutions in 3D}  
\label{Ch:3D} 

We make some brief remarks about the 3D case. In the case $p=2$, the same progression of a ball moving from touching the origin for $a=0$ to a ball centred on the origin above a critical value of $a$ appears to hold. And again, we have not been able to show this rigorously.

Lemma~\ref{lem:logconvex_allD} indicates that (for $p>1$) the density is log-convex and isoperimetric regions are spheres, with centre at the origin, when $a$ is greater than
\begin{equation}
 a_{crit} 
  =
  \left(\frac{3(p+3)}{16 \pi p}\right)^\frac{p}{p+3}  
  (p-1)^{-\frac{3}{p+3}}
  M^\frac{p}{p+3},
 \label{eq:aChCrit3D}
\end{equation}
which is again only valid for $p>1$.

The surface area of the sphere is
\begin{equation}
     S=4\pi (R^{p+2}+R^2a)
\label{eq:RpMax_P3D}
 \end{equation}
where, for given mass $M_0$, the radius $R$ of the circle is found from 
\begin{equation}
     M_0=4\pi R^3 
     \left(\frac{R^{p+3}}{p+3}+\frac{a}{3} \right).
\label{eq:RpMax_M3D}
 \end{equation}
It appears straightforward to find $R$ in the case $p=3$, but not in the case $p=2$, for example.

Numerical experiments suggest that for $a < a_{crit}$ only in the case $p=2$ is the isoperimetric region still spherical, and in this case it is not centred on the origin. 
As in 2D, we are not able to make progress for $p \ne 2$.

\subsection {The case $p=2$ in 3D}
\label{sec:3Dp2}

In the case $p=2$, eq.~(\ref{eq:aChCrit3D}) gives $\displaystyle a_{crit}=\left(\frac{15M}{32\pi}\right)^\frac{2}{5}$. As noted above, we are unable to find a closed form expression for $R$ from eq.~(\ref{eq:RpMax_M3D}) for a sphere centred on the origin in this case.

\begin{figure}
   \centering
         \includegraphics[width=0.5\textwidth]{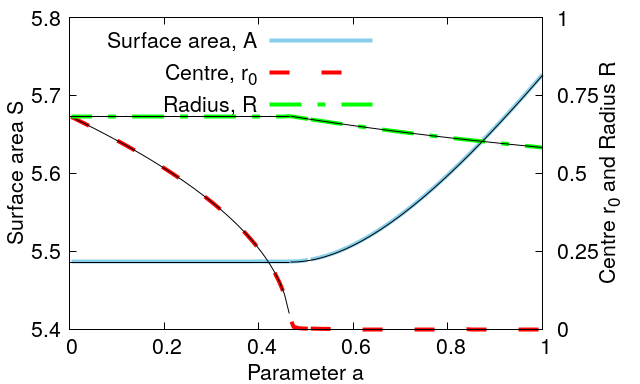}
    \caption{Surface area, radius and centre position of the conjectured isoperimetric sphere for $p=2$, with a mass of $M=1$. The predictions are shown as thin black lines: eq.~(\ref{eq:PandM3Dasym}) for $a<a_{crit}$ and for $a > a_{crit}$ these are the result of plotting eqs.~(\ref{eq:RpMax_P3D}) and (\ref{eq:RpMax_M3D}) with $R$ as a free parameter. These are compared with the result of Surface Evolver simulations, shown as thick coloured lines.
    The agreement is excellent, suggesting that the isoperimetric solution is indeed a sphere.
    }
  \label{fig:PR3}
\end{figure}

For $a<a_{crit}$, we assume that the isoperimetric region is a sphere with centre a distance $r_0 > 0$ from the origin and radius $R$. That is, the sphere is not centred on the origin. The mass and surface area are
\[
M=\int_{0}^{R} {\rm d}q \int_{-\pi}^{\pi} {\rm d}\theta  \int_{0}^{\pi} {\rm d}\phi \,\, q^2 \sin\phi \left[ (q^2+r_0^2+2qr_0 \sin\phi \cos\theta)^{p/2}+a \right]
\]
and
\[
S= \int_{-\pi}^{\pi} {\rm d}\theta \int_{0}^{\pi} {\rm d}\phi \,\, R^2\sin\phi   \left[ (R^2+r_0^2+2Rr_0 \sin\phi \cos\theta)^{p/2}+a\right],
\]
which can be evaluated in the case $p=2$ to give
\[
A=4\pi (R^4+R^2 r_0^2+R^2 a),
\quad \quad
M=\frac{4\pi}{15} \left( 3R^5 + 5 R^3 r_0^2 +5 R^3a \right).
\] 
Then for a fixed target mass $M_0$ we form the Lagrangian as in 2D, obtaining
\begin{equation}
     \lambda = -\frac{3}{R}
     \quad \mbox{ and } \quad
     R^2=r_0^2+a,
   \label{eq:Pp3}
\end{equation}
where the second of these is identical to the 2D result and the first replaces $2$ with $3$, suggesting that it is a consequence of changing dimension.
Substituting $r_0^2+a=R^2$ gives the mass and surface area:
\begin{equation}
M = \frac{32}{15} \pi R^5
\quad \quad
P=8\pi R^4 = 8 \pi \left(\frac{15 M}{32\pi}\right)^\frac{4}{5}.
\label{eq:PandM3Dasym}
\end{equation}
So both the radius and surface area of the isoperimetric region are independent of $a$ below the critical value.

This suggests that (for given mass) as $a$ increases the isoperimetric solution is a sphere of constant radius which migrates towards the origin. Once $a_{crit}$ is reached, the centre of the sphere remains at the origin and its radius decreases. These predictions are compared with numerical simulations in figure~\ref{fig:PR3}, showing excellent agreement. This is the same pattern seen in both 1D and 2D for $p=2$, and it is possible that it continues to higher dimensions.

\section{Conclusions}

We have studied the shape and location of the isoperimetric region in a space with density $\rho(r) = r^p+a$, extending work on the density $r^p$.

In the case $p=2$, in dimensions one to three, as $a$ increases the isoperimetric region migrates from an asymmetric position to a symmetric one, with constant size. The symmetric state is realised at a critical value of $a$ dependent on $p$ and the mass $M$ of the interval. Above this threshold, the region remains centred on the origin and shrinks as $a$ increases further. 

We have given a proof of this in one dimension, given compelling evidence in 2D, and outlined what happens in 3D. A similar pattern is found in 1D for any $p>1$, but for dimensions two and three the isoperimetric region becomes non-circular/non-spherical for $p \ne 2$.

The three-dimensional result may be susceptible to comparison with experiment. Consider the experiment in which a hemispherical bubble is formed on a horizontal surface. Bring a second surface, spherical in shape (such as the lower part of a round bowl) down on to the bubble and squeeze it until the bowl touches the plane. The bubble moves to one side and its perimeter is pinned  at the contact point. As the bowl is gradually raised, the bubble moves back towards the centre of the bowl. This experiment therefore provides a physical manifestation of our 3D result in the case $p\approx 2$. We have not experimented with different shaped bowls.

Several results for the density $\rho(x) = |x|^p$ appear to carry over to the case $\rho(x) = |x|^p+a$, for example that a (1D) double bubble (i.e. two separate masses) is two adjacent intervals that meet at the origin~\citep{huang2019isoperimetric}. It would be interesting to determine whether there are analogous results for 2D double bubbles with our density function, and whether there are further more general results that carry over to this case. It  may be possible to show that the isoperimetric regions are symmetric using tools from geometric measure theory~\citep{morgan2016geometric}, and the question of connectedness still remains open.

\appendix
\section{Reduction of many intervals to one interval that includes the origin: proof of Lemma~\ref{lem:joinintervals}}
\label{sec:app-twointervals}

We consider a density function $\rho(x)$ on the real line, with $\rho$ monotonically increasing on $x>0$ and monotonically decreasing on $x<0$.

We will show that a finite set of intervals with a given total mass can be reduced to a single interval  of the same mass with an equal or lower perimeter $P$. We will then show that this interval has lower perimeter if it includes the origin, by first proving in Lemma~\ref{lem:moveinterval1} that the perimeter is reduced if it has one end at the origin, and then in Lemma~\ref{lem:concatenateplusminus} proving that this is still the case if the interval includes the origin. 

We define $F$ to be the primitive of $\rho$:
\[
F(q)=\int_{0}^{q} \rho(x) \,{\rm d}x.
\]  
Then where $\rho(x)$ is monotonically increasing, $F(q)$ is monotonically increasing, and {\it vice versa}.

In Lemma~\ref{lem:joininterval1_sub} we prove that the perimeter of two non-overlapping intervals in the positive half-line is reduced if they are concatenated. If we repeatedly apply this lemma to any finite set of intervals $[\alpha_i,\beta_i] , 1\leq i\leq N,$,  taking them two at a time, we can concatenate any number of intervals to a single interval with the same total mass.

\begin{lem} 
For monotone increasing density $\rho(x)$, the perimeter of two non-overlapping intervals in the positive half-line is reduced if they are concatenated. 
\label{lem:joininterval1_sub}
\end{lem}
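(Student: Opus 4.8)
The plan is to work with the primitive $F(q)=\int_{0}^{q}\rho(x)\,{\rm d}x$ introduced just above, which on the positive half-line is strictly increasing because $\rho>0$ there. Write the two non-overlapping intervals as $[\alpha_1,\beta_1]$ and $[\alpha_2,\beta_2]$ with $0\le\alpha_1<\beta_1\le\alpha_2<\beta_2$, so their masses are $F(\beta_1)-F(\alpha_1)$ and $F(\beta_2)-F(\alpha_2)$ and their combined perimeter is $\rho(\alpha_1)+\rho(\beta_1)+\rho(\alpha_2)+\rho(\beta_2)$. The concatenation I have in mind is to replace them by the single interval $[\alpha_1,\gamma]$ that keeps the left endpoint $\alpha_1$ and the same total mass; that is, $\gamma$ is defined by $F(\gamma)-F(\alpha_1)=[F(\beta_1)-F(\alpha_1)]+[F(\beta_2)-F(\alpha_2)]$.

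First I would solve this for $\gamma$, obtaining $F(\gamma)=F(\beta_1)+F(\beta_2)-F(\alpha_2)$. The decisive step is then the estimate $\gamma\le\beta_2$: since $\beta_1\le\alpha_2$ and $F$ is increasing we have $F(\beta_1)-F(\alpha_2)\le0$, whence $F(\gamma)\le F(\beta_2)$ and therefore $\gamma\le\beta_2$. Equivalently, the combined mass equals the mass of $[\alpha_1,\beta_2]$ minus the nonnegative mass sitting in the gap $(\beta_1,\alpha_2)$, so the merged interval need not extend as far right as $\beta_2$. A parallel check gives $\gamma\ge\beta_1>\alpha_1\ge0$, so both endpoints of the new interval lie in the positive half-line where $\rho$ is monotone.

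With $\gamma\le\beta_2$ in hand the conclusion is immediate from monotonicity of $\rho$: the new perimeter is $\rho(\alpha_1)+\rho(\gamma)\le\rho(\alpha_1)+\rho(\beta_2)<\rho(\alpha_1)+\rho(\beta_1)+\rho(\alpha_2)+\rho(\beta_2)$, the strict inequality using $\rho(\beta_1),\rho(\alpha_2)>0$. Thus the single concatenated interval has strictly smaller perimeter and the same mass; applying the operation repeatedly then reduces any finite family to one interval, as asserted in the paragraph preceding the lemma.

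I do not expect a serious obstacle here, as the statement is elementary once $F$ and its monotonicity are in place. The only points requiring care are bookkeeping ones: fixing the convention that when the two intervals fuse the shared interior material contributes nothing to the perimeter (so the two ``inner'' endpoint terms genuinely disappear), and checking that the new endpoints remain on the positive half-line so that the monotonicity of $\rho$ applies without sign issues. As an alternative, more dynamical route I would mention the continuous version: slide the right interval $[\alpha_2,\beta_2]$ towards the origin while adjusting its right endpoint to preserve its own mass, which forces $\beta_2'(t)=-\rho(\alpha_2-t)/\rho(\beta_2(t))$; a short differentiation shows its perimeter contribution $\rho(\alpha_2-t)+\rho(\beta_2(t))$ is strictly decreasing, and once it abuts $[\alpha_1,\beta_1]$ the two fuse into a single interval of the same total mass and smaller perimeter.
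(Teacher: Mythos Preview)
Your proof is correct and is essentially the same as the paper's: the paper slides $[\alpha_2,\beta_2]$ left to $[\beta_1,\beta_3]$ with $\beta_3$ chosen to preserve $M_2$, and your $\gamma$ satisfies exactly $F(\gamma)=F(\beta_1)+F(\beta_2)-F(\alpha_2)=F(\beta_3)$, so $\gamma=\beta_3$ and the perimeter comparison is identical. Your direct formulation via $F$ is a bit cleaner than the paper's two-step ``move then merge'' description, and your alternative sliding argument is precisely the continuous version of what the paper does.
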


\begin{proof}

Consider two non-overlapping intervals $[\alpha_1,\beta_1]$ and  $[\alpha_2,\beta_2]$ in the positive half-line with $0<\alpha_1<\beta_1<\alpha_2<\beta_2$  and total non-zero mass $M_0= M_1 + M_2$ where:
\begin{equation}
    M_1=\int_{\alpha_1}^{\beta_1} \rho(x) \,dx = F(\beta_1) - F(\alpha_1) 
\end{equation} and 
\begin{equation}
  M_2 = \int_{\alpha_2}^{\beta_2} \rho(x) \,dx = F(\beta_2) - F(\alpha_2)
\end{equation}
and perimeter 
\[
P_{init}=\rho(\alpha_1) + \rho(\beta_1) + \rho(\alpha_2) + \rho(\beta_2).
\]
As $\alpha_2>\beta_1$ we can move the interval $[\alpha_2,\beta_2]$ to the left to an interval $[\beta_1,\beta_3]$, where the lower end of the interval coincides with the upper end of the other interval and $\beta_3$ is chosen to keep the mass $M_2$ unchanged. We have $\beta_3<\beta_2$ since $\rho$ is monotone increasing.

The new perimeter is 
\[
P_{new} = \rho(\alpha_1) + \rho(\beta_1) + \rho(\beta_1) + \rho(\beta_3)
\]
which is less than $P_{init}$ because $\rho(\beta_1)<\rho(\alpha_2)$ and $\rho(\beta_3)<\rho(\beta_2)$.

We therefore have an interval $[\alpha_1,\beta_3]$ with the same mass as the original two intervals and reduced perimeter. 

\end{proof}

\begin{lem}
With monotone increasing density $\rho(x)$, the perimeter of a single interval $[\alpha,\beta]$ in the positive half-line  ($0< \alpha <\beta$) is reduced if it is moved so that it has one end at the origin.
\label{lem:moveinterval1}
\end{lem}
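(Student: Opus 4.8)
The plan is to relocate the interval to the equal-mass interval $[0,\gamma]$ that has its left end at the origin, and then to compare the two perimeters term by term. First I would define the right-hand endpoint $\gamma$ of the relocated interval by the mass-preservation condition $F(\gamma)=M$, where $M=F(\beta)-F(\alpha)$ is the mass of $[\alpha,\beta]$ and I have used $F(0)=0$. Since $\rho$ is positive and continuous, $F$ is continuous and strictly increasing on the positive half-line with $F(0)=0$, so the equation $F(\gamma)=M$ has a unique positive solution $\gamma$; this simultaneously defines the relocated interval and guarantees its existence.

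The key step is to establish that $\gamma<\beta$. Because $\alpha>0$ and $\rho>0$ we have $F(\alpha)>0$, so $F(\gamma)=F(\beta)-F(\alpha)<F(\beta)$, and the strict monotonicity of $F$ gives $\gamma<\beta$ immediately. Intuitively, sliding the interval to the left forces it to include the low-density region near the origin, so it must contract on the right to retain the same mass.

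With $\gamma<\beta$ in hand the conclusion follows by a term-by-term comparison of perimeters. The relocated perimeter is $\rho(0)+\rho(\gamma)$ and the original perimeter is $\rho(\alpha)+\rho(\beta)$. Since $\rho$ is increasing and $0<\alpha$ we have $\rho(0)\le\rho(\alpha)$, and since $\gamma<\beta$ we have $\rho(\gamma)\le\rho(\beta)$; for a strictly increasing density such as $\rho(x)=|x|^p+a$ both inequalities are strict, so the perimeter is strictly reduced, as claimed.

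There is no serious obstacle here: the result is a direct consequence of the monotonicity of $\rho$ (and hence of $F$) and requires no differentiability. The only points needing care are the existence and uniqueness of $\gamma$, which follow from the intermediate value theorem applied to the continuous, strictly increasing $F$, and the bookkeeping that ensures $\gamma<\beta$ orients both inequalities in the perimeter-reducing direction. Should one prefer a continuous sliding picture, an alternative is to parametrize the family of equal-mass intervals $[s,\beta(s)]$ with left endpoint $s\in[0,\alpha]$, where $\beta(s)$ solves $F(\beta(s))-F(s)=M$, and to verify that $\frac{dP}{ds}=\rho'(s)+\rho'(\beta(s))\,\frac{\rho(s)}{\rho(\beta(s))}>0$, so that $P$ is minimised at $s=0$; this variant needs differentiability, whereas the endpoint comparison above is cleaner and applies to any monotone density.
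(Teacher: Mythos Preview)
Your proof is correct and follows essentially the same approach as the paper: define the mass-preserving interval $[0,\gamma]$, use monotonicity of $F$ to deduce $\gamma<\beta$, and compare perimeters term by term using monotonicity of $\rho$. Your treatment is slightly more careful in justifying the existence and uniqueness of $\gamma$, and your optional sliding variant is an additional remark not in the paper, but the core argument is identical.
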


\begin{proof}

Consider an interval $[\alpha_1,\beta_1]$ with $0<\alpha_1<\beta_1$  and non-zero mass 
\begin{equation} 
M_0=\int_{\alpha_1}^{\beta_1} \rho(x) \,dx = F(\beta_1) - F(\alpha_1).
    \label{eq:moveintervalmass1}
\end{equation} 
The perimeter is
\begin{equation} 
P_{init}=\rho(\alpha_1) + \rho(\beta_1).
\end{equation}

Move the interval left to $[0,\beta_2]$, then
\begin{equation}
    M_0=\int_{0}^{\beta_2} \rho(x) \,dx = F(\beta_2) - F(0)
    \label{eq:moveintervalmass2}
\end{equation}
where $\beta_2$ is chosen to leave mass unchanged.
The perimeter is now 
\begin{equation}
  P_{new}=\rho(0) + \rho(\beta_2) . 
\end{equation} 
We will show that $P_{new}<P_{init}$.

Equating the expressions for the mass, eqs~(\ref{eq:moveintervalmass1}) and (\ref{eq:moveintervalmass2}), gives:
\[
F(\beta_2) - F(0) = F(\beta_1) - F(\alpha_1).
\] 
Rearranging gives
\[
F(\beta_1) - F(\beta_2) = F(\alpha_1) - F(0).
\] 
Now $F(\alpha_1) - F(0)>0$ because $F$ is monotonically increasing. So $F(\beta_1) - F(\beta_2) > 0$ and hence  $\beta_1 > \beta_2$. Then $\rho(\beta_1) - \rho(\beta_2)>0$ because $\rho$ is monotonically increasing. 
Also, as $\alpha_1>0$, $\rho(\alpha_1)-\rho(0)>0$.
Then
\[
P_{new}-P_{init} = 
 [ \rho(0) + \rho(\beta_2)] - [\rho(\beta_1) + \rho(\alpha_1)] = 
  -[\rho(\beta_1) - \rho(\beta_2)] - [\rho(\alpha_1) - \rho(0)].
\] 
Both terms are negative, so $P_{new} < P_{init}$ as required.
Therefore  $[0,\beta_2]$ has a smaller perimeter than $[\alpha_1,\beta_1]$.

\end{proof}

\begin{lem}
For any density function $\rho(x)$ that is non-zero at the origin, the perimeter of two non-overlapping intervals, one in the positive half-line and the other in the negative half-line, each with one end at the origin, is reduced if they are concatenated to form a single interval that contains the origin. 
\label{lem:concatenateplusminus} 
\end{lem}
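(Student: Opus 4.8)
The plan is to compute the perimeter directly before and after concatenation, since the entire saving comes from the two endpoints that coincide at the origin. First I would fix notation: write the interval in the positive half-line as $[0,\beta]$ with $\beta>0$ and the interval in the negative half-line as $[\alpha,0]$ with $\alpha<0$. These two intervals share only the single point $\{0\}$, so their union is the single interval $[\alpha,\beta]$, and the mass is additive across that point of measure zero:
\[
M([\alpha,\beta]) = M([\alpha,0]) + M([0,\beta]).
\]
Thus concatenation leaves the total mass unchanged, which is what we need in order to make a fair comparison of perimeters.

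Next I would compare perimeters using the definition $P=\rho(\text{left end})+\rho(\text{right end})$. The two separate intervals contribute
\[
P_{init} = \bigl[\rho(\alpha)+\rho(0)\bigr] + \bigl[\rho(0)+\rho(\beta)\bigr] = \rho(\alpha)+\rho(\beta)+2\rho(0),
\]
whereas the single concatenated interval $[\alpha,\beta]$ has perimeter $P_{new}=\rho(\alpha)+\rho(\beta)$. Hence $P_{init}-P_{new}=2\rho(0)$, which is strictly positive precisely because $\rho$ is a positive density that is non-zero at the origin. Therefore $P_{new}<P_{init}$, as required.

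The key observation — and there is essentially no obstacle here — is simply that the origin ceases to be an endpoint once the two intervals are joined, so its density, which had been counted once for each interval, is no longer counted at all. The only points requiring any care are the bookkeeping of the endpoint contributions and the additivity of the mass across the shared point $\{0\}$, both of which are immediate. Combined with Lemmas~\ref{lem:joininterval1_sub} and~\ref{lem:moveinterval1}, this completes the reduction of any finite collection of intervals to a single interval containing the origin with no greater perimeter, as asserted in Lemma~\ref{lem:joinintervals}.
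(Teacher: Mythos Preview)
Your proposal is correct and follows essentially the same approach as the paper: compute $P_{init}=\rho(\alpha)+2\rho(0)+\rho(\beta)$ for the two intervals and $P_{new}=\rho(\alpha)+\rho(\beta)$ for their concatenation, observe that the mass is unchanged, and conclude that the perimeter drops by exactly $2\rho(0)>0$. The paper's version is simply terser, omitting the explicit mass-additivity remark.
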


\begin{proof}
The perimeter of two intervals $[\alpha,0]$ and $[0,\beta]$, with $\alpha < 0 $ and $\beta>0$, is $P=\rho(\alpha) + \rho(0) + \rho(\beta) + \rho(0) $. 

If we merge these two intervals to create an interval $[\alpha,\beta]$, it clearly has the same mass, but its perimeter will be reduced by $2\rho(0)$.
\end{proof}

\bibliographystyle{plainnat}

\begin{thebibliography}{11}
\providecommand{\natexlab}[1]{#1}
\providecommand{\url}[1]{\texttt{#1}}
\expandafter\ifx\csname urlstyle\endcsname\relax
  \providecommand{\doi}[1]{doi: #1}\else
  \providecommand{\doi}{doi: \begingroup \urlstyle{rm}\Url}\fi

\bibitem[Brakke(1992)]{brakke92}
K.~Brakke.
\newblock {The Surface Evolver}.
\newblock \emph{{Exp. Math.}}, {\bf{1}}:\penalty0 141--165, 1992.

\bibitem[Carroll et~al.(2008)Carroll, Jacob, Quinn, and
  Walters]{carroll2008isoperimetric}
C.~Carroll, A.~Jacob, C.~Quinn, and R.~Walters.
\newblock The isoperimetric problem on planes with density.
\newblock \emph{Bulletin of the Australian Mathematical Society}, 78:\penalty0
  177--197, 2008.

\bibitem[Chambers(2019)]{chambers2019proof}
G.R. Chambers.
\newblock Proof of the log-convex density conjecture.
\newblock \emph{Journal of the European Mathematical Society}, 21:\penalty0
  2301--2332, 2019.

\bibitem[Corwin(2006)]{corwin2006differential}
I.~Corwin.
\newblock Differential geometry of manifolds with density.
\newblock \emph{Rose-Hulman Undergraduate Mathematics Journal}, 7:\penalty0 2,
  2006.

\bibitem[Dahlberg et~al.(2010)Dahlberg, Dubbs, Newkirk, and
  Tran]{dahlberg2010isoperimetric}
J.~Dahlberg, A.~Dubbs, E.~Newkirk, and H.~Tran.
\newblock Isoperimetric regions in the plane with density $r^p$.
\newblock \emph{New York J. Math}, 16:\penalty0 31--51, 2010.

\bibitem[D{\'\i}az et~al.(2012)D{\'\i}az, Harman, Howe, and
  Thompson]{diaz2012isoperimetric}
A.~D{\'\i}az, N.~Harman, S.~Howe, and D.~Thompson.
\newblock Isoperimetric problems in sectors with density.
\newblock \emph{Advances in Geometry}, 12:\penalty0 589--619, 2012.

\bibitem[Huang et~al.(2019)Huang, Qian, Pan, Xu, Yang, and
  Zhou]{huang2019isoperimetric}
J.~Huang, X.~Qian, Y.~Pan, M.~Xu, L.~Yang, and J.~Zhou.
\newblock Isoperimetric problems on the line with density $|x|^p$.
\newblock \emph{Rose-Hulman Undergraduate Mathematics Journal}, 20:\penalty0 5,
  2019.

\bibitem[Kolesnikov and Zhdanov(2011)]{kolesnikov2011isoperimetric}
A.V. Kolesnikov and R.I. Zhdanov.
\newblock On isoperimetric sets of radially symmetric measures.
\newblock \emph{Concentration, functional inequalities and isoperimetry},
  545:\penalty0 123--154, 2011.

\bibitem[Morgan(2016)]{morgan2016geometric}
F.~Morgan.
\newblock \emph{Geometric measure theory: a beginner's guide}.
\newblock Academic press, 2016.

\bibitem[Morgan and Pratelli(2013)]{morgan2013existence}
F.~Morgan and A.~Pratelli.
\newblock Existence of isoperimetric regions in with density.
\newblock \emph{Annals of Global Analysis and Geometry}, 43:\penalty0 331--365,
  2013.

\bibitem[Rosales et~al.(2008)Rosales, Canete, Bayle, and
  Morgan]{rosales2008isoperimetric}
C.~Rosales, A.~Canete, V.~Bayle, and F.~Morgan.
\newblock On the isoperimetric problem in euclidean space with density.
\newblock \emph{Calculus of Variations and Partial Differential Equations},
  31:\penalty0 27--46, 2008.

\end{thebibliography}

\end{document}